\documentclass[11pt]{amsart}

\usepackage{microtype}
\usepackage{amsmath,amssymb,amsthm,mathtools}
\usepackage{enumitem}
\usepackage{aliascnt}
\usepackage[hidelinks]{hyperref}
\usepackage[nameinlink,noabbrev]{cleveref}
\usepackage[edges]{forest}

\theoremstyle{plain}
\newtheorem{theorem}{Theorem}[section]

\newaliascnt{proposition}{theorem}
\newtheorem{proposition}[proposition]{Proposition}
\aliascntresetthe{proposition}

\newaliascnt{lemma}{theorem}
\newtheorem{lemma}[lemma]{Lemma}
\aliascntresetthe{lemma}

\newaliascnt{corollary}{theorem}
\newtheorem{corollary}[corollary]{Corollary}
\aliascntresetthe{corollary}

\newaliascnt{question}{theorem}
\newtheorem{question}[question]{Question}
\aliascntresetthe{question}

\theoremstyle{definition}
\newaliascnt{definition}{theorem}
\newtheorem{definition}[definition]{Definition}
\aliascntresetthe{definition}

\newaliascnt{example}{theorem}
\newtheorem{example}[example]{Example}
\aliascntresetthe{example}

\theoremstyle{remark}
\newaliascnt{remark}{theorem}
\newtheorem{remark}[remark]{Remark}
\aliascntresetthe{remark}

\newcommand{\Hh}{\mathcal H}
\newcommand{\NN}{\mathbb N}
\newcommand{\Ninf}{\NN_{-\infty}}
\newcommand{\htree}{\operatorname{ht}}
\newcommand{\botvec}{\boldsymbol{-\infty}}
\newcommand{\eval}[1]{\left[\!\left[#1\right]\!\right]}
\newcommand{\defi}[1]{\textbf{#1}}
\newcommand{\Kraft}{\mathcal K}
\newcommand{\Trees}{\mathcal T}
\newcommand{\Languages}{\mathcal L}
\newcommand{\Variety}{\mathsf V}
\newcommand{\wideNinf}{\widehat{\Ninf}}
\newcommand{\wideHh}{\widehat{\Hh}}

\title{The Tropical Algebra of Binary-Tree Height}
\author{Grant Molnar}
\date{}

\subjclass[2020]{Primary 08A40, 08A05; Secondary 05C05, 06A12, 08B20.}
\keywords{binary-tree height; nonassociative algebra; depth profiles; Kraft inequality; max-plus linear forms; term operations; semilattice-ordered algebras; associative spectra.}

\crefname{theorem}{theorem}{theorems}
\crefname{proposition}{proposition}{propositions}
\crefname{lemma}{lemma}{lemmas}
\crefname{corollary}{corollary}{corollaries}
\crefname{question}{question}{questions}
\crefname{definition}{definition}{definitions}
\crefname{example}{example}{examples}
\crefname{remark}{remark}{remarks}
\Crefname{theorem}{Theorem}{Theorems}
\Crefname{proposition}{Proposition}{Propositions}
\Crefname{lemma}{Lemma}{Lemmas}
\Crefname{corollary}{Corollary}{Corollaries}
\Crefname{question}{Question}{Questions}
\Crefname{definition}{Definition}{Definitions}
\Crefname{example}{Example}{Examples}
\Crefname{remark}{Remark}{Remarks}

\begin{document}

\begin{abstract}
The binary-tree height recursion defines an algebra
$\mathcal{H}$ on $\mathbb{N}\cup\{-\infty\}$, with join given by
$\max$ and product
\[
a\star b=\max\{a,b\}+1.
\]
We show that weighted evaluation of a labelled tree depends only on the
greatest depth of each label. Single-tree profiles are exactly the vectors
satisfying the binary Kraft inequality, while finite joins realize every vector in
$\bigl(\mathbb{N}\cup\{-\infty\}\bigr)^n$; hence the $n$-variable term
operations form the free algebra $\mathcal{H}^n$. We also classify $\mathcal{H}$'s
compatible semilattice operation, subalgebras, endomorphisms, congruences,
and finite quotients, and recover the dyadic-composition spectrum at the
full-linear boundary.
\end{abstract}

\maketitle
\section{Introduction}\label{sec:introduction}

A \defi{finite full rooted binary tree} is a finite rooted tree in which
every vertex has either zero or two children.  The \defi{height} of such a tree is the greatest graph distance from the root to a leaf, so the one-vertex tree has height \(0\).  If \(T\) is not
a leaf, we let \(T_1\) and \(T_2\) be the rooted subtrees whose roots are the two
children of the root of \(T\).  Then
\[
  \htree(T)=\max\{\htree(T_1),\htree(T_2)\}+1.
\]
Thus height is computed from the leaves upward by taking a maximum and adding
one at each grafting.

The same recursive rule extends to arbitrary leaf weights.  We let \(\NN\coloneqq\{0,1,2,\ldots\}\) and \(\Ninf\coloneqq\NN\cup\{-\infty\}\), where \(-\infty<n\) and \(-\infty+n\coloneqq-\infty\) for \(n\in\NN\).  Abstracting the two subtree values to \(a,b\in\Ninf\) gives the binary operation
\[
  a\star b=\max\{a,b\}+1.
\]

\begin{definition}[Height algebra]\label{def:height-algebra}
The \defi{height algebra} is
\[
  \Hh\coloneqq(\Ninf,\vee,\star,-\infty),
\]
where
\begin{equation}\label{eq:height-operations}
  a\vee b\coloneqq\max\{a,b\},
  \qquad\text{and}\qquad
  a\star b\coloneqq\max\{a,b\}+1.
\end{equation}
\end{definition}

The element \(-\infty\) is least for \(\vee\), but it is not absorbing
for \(\star\).

\begin{samepage}
We fix \(n\geq1\), and put \(X\coloneqq\{x_1,\dots,x_n\}\).  All rooted binary
trees below are finite and full.  Unless stated otherwise, they are nonplane,
so the two children of an internal vertex are unordered.  Labelled trees are
considered up to rooted isomorphism preserving all leaf labels.  The depth of
a leaf is its graph distance from the root.

\begin{definition}[Labelled trees and tree monomials]
\label{def:tree-monomial}
A \defi{labelled tree over \(X\)} is a rooted binary tree in which each leaf
carries either an element of \(X\) or the \defi{silent label} \(-\infty\).
A leaf carrying \(-\infty\) is a \defi{silent leaf}; every other leaf is
\defi{active}.  A tree whose leaves are all silent is an \defi{all-silent
tree}.  The \defi{grafting} \(S\circ T\) is obtained by adjoining a new root
whose children are the roots of \(S\) and \(T\).  A \defi{tree monomial} is a
term built from \(X\cup\{-\infty\}\) using \(\star\) alone.  Choosing a
planar embedding of a labelled tree produces a tree monomial, and different
choices differ only by interchanging sibling subterms.  Thus a nonplane
labelled tree records a tree monomial modulo commutativity of \(\star\).
\end{definition}
\end{samepage}

To evaluate a labelled tree over \(X\), we assign the weight \(u_i\in\Ninf\) to
each leaf carrying \(x_i\) and the value \(-\infty\) to each silent leaf.
We combine the two branch values with \(\star\) from the leaves to the root.  The
product \(\star\) is commutative but not associative, so the value can depend
on the rooted bracketing even though the order of two siblings does not
matter.  What information about the labelled tree survives this evaluation?

For each label, evaluation remembers only the greatest depth at which that label occurs.  The tree value is the maximum, over the labels that occur, of the assigned weight plus this greatest depth.
We call the corresponding coefficient vector the \defi{depth profile}; it is
the key invariant developed below.

Two questions organize the paper.

The first question is this: when do two labelled trees, or finite joins of labelled
trees, induce the same operation on \(\Hh\)?  For a single tree, the weighted
depth formula shows that evaluation depends only on its depth profile
(\Cref{thm:weighted-depth,cor:tree-operation-equality}).  The profiles
realized by single trees are exactly the vectors satisfying the binary Kraft
inequality (\Cref{thm:kraft-profile}), whereas finite joins realize every
vector in \(\Ninf^n\) (\Cref{thm:all-profiles}).  The resulting term
operations are max-plus linear in the standard tropical sense
(\Cref{thm:tropical-normal-form}); see also
\cite[Chapter~1, \S1.1]{MaclaganSturmfels2015}.  The classical full-linear
problem, in which each variable occurs exactly once, is recovered by
restricting the same depth-profile classification
(\Cref{thm:linear-spectra}).

We recall that an \(n\)-variable \defi{term operation} is obtained by evaluating
a term in \(\vee\), \(\star\), and \(-\infty\).  The \defi{variety generated
by \(\Hh\)}, denoted by \(\Variety(\Hh)\), is the class of algebras satisfying
every identity valid in \(\Hh\).

\begin{theorem}[Tree evaluation and normal forms]\label{thm:main-normal-forms}
We let \(\Hh\) be the height algebra of \Cref{def:height-algebra}.
\begin{enumerate}[label=\textnormal{(\arabic*)},leftmargin=*]
  \item Every labelled rooted binary tree induces a max-plus linear operation.
  Its \(i\)th coefficient is the greatest depth at which \(x_i\) occurs,
  with coefficient \(-\infty\) when \(x_i\) does not occur.  Two labelled
  trees induce the same operation if and only if these coefficient vectors
  agree.

  \item A vector \(d=(d_1,\dots,d_n)\in\Ninf^n\) occurs as the coefficient
  vector of a single labelled tree if and only if
  \[
    \sum_{i=1}^n2^{-d_i}\leq1,
  \]
  where \(2^{-\infty}\) is interpreted as \(0\).  Such a vector has a
  realization with an internal root if and only if none of its coordinates
  is zero.

  \item Every vector in \(\Ninf^n\) is the coefficient vector of a finite
  join of labelled trees.  The algebra of \(n\)-variable term operations of \(\Hh\)
  is naturally isomorphic to \(\Hh^n\).  This is the free algebra on \(n\)
  generators in \(\Variety(\Hh)\).  Under finite-language semantics, the
  empty language supplies an absorbing zero, while a unital completion
  requires a distinct algebraic unit \(\varepsilon\), interpreted
  heuristically as an empty-tree state.

  \item Among terms in which each variable occurs exactly once, distinct
  bracketings in a fixed variable order induce distinct operations.  If
  variable permutations are also allowed, the number of induced operations
  is the number of ordered representations of \(1\) as a sum of \(n\)
  negative powers of \(2\).
\end{enumerate}
\end{theorem}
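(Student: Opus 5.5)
The plan is to prove the four parts in order, each resting on a single structural fact: the weighted depth formula. For a labelled tree \(T\) with profile \(d(T)\), where \(d_i\) is the greatest depth of a leaf labelled \(x_i\) (or \(-\infty\) if \(x_i\) is absent), we claim
\[
  \eval{T}(u)=\max_i\,(u_i+d_i).
\]
For a one-vertex tree \(T\), this maximum is taken over the empty set and equals \(-\infty\); this needs care. Induction on height should give the formula, because
\[
  \eval{S\circ T}(u)=\max\{\eval S(u),\eval T(u)\}+1,
\]
and the depth profile of \(S\circ T\) is the coordinatewise maximum of the profiles of \(S\) and \(T\), raised by \(1\). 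Since \(+1\) distributes over \(\max\) in \(\Ninf\), the step is immediate. The subtle point is that in a tree with at least one active leaf, silent leaves contribute nothing. In an all-silent tree, however, the value is the root's height when it is internal, and \(-\infty\) only for the single silent leaf. So the formula has to be stated with a constant term \(c\), namely \(\max(c,\max_i(u_i+d_i))\), where \(c\) is the greatest depth of a silent leaf. I expect \(c\) to be dominated whenever some active leaf exists: an active leaf's sibling subtree has depth at least its own, and this should force \(d_i\geq c\) for some \(i\). That is the one place needing a real lemma. If domination fails, I would instead record \(c\) as an extra coordinate.

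For the remainder of part (1), I would separate coefficients by evaluating at \(u=e_j\cdot N\) with the other coordinates \(-\infty\), which reads off \(d_j\) directly. Thus equal operations force equal profiles, and the converse is the formula itself.

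For part (2), I would follow the Kraft route. In a full binary tree the leaves satisfy \(\sum 2^{-\operatorname{depth}}=1\). Keeping only the deepest occurrence of each label, and treating the remaining leaves as contributions, gives \(\sum 2^{-d_i}\leq1\). Conversely, given \(d\) satisfying the inequality, I would sort the finite \(d_i\) increasingly and assign canonical prefix-free codewords of length \(d_i\), as in the standard Kraft construction. I would then complete the resulting partial binary tree to a full one by adding silent leaves at the missing children. This creates no active leaves, so the profile is exactly \(d\). An internal root exists exactly when no \(d_i=0\), since \(d_i=0\) forces the tree to be the single leaf \(x_i\). When every \(d_i=-\infty\), I would use an all-silent tree.

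For part (3), each \(\max_i(u_i+d_i)\) decomposes as a join of single-variable pieces. The piece \(u_j+d_j\) comes from a tree with one \(x_j\) leaf at depth \(d_j\) and silent leaves elsewhere, for instance a caterpillar. The empty join, namely the constant \(-\infty\), handles the all-\(-\infty\) vector. Closure under the operations is checked by \(\vee\) acting coordinatewise as \(\max\), and \(\star\) acting as coordinatewise max plus \(1\); constants are handled via the \(c\) coordinate, if that is needed. This shows that term operations correspond bijectively to \(\Hh^n\), or to \(\Hh^{n}\) with the constant absorbed. Freeness in \(\Variety(\Hh)\) is the standard fact that the algebra of term operations is free. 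The language remarks are heuristic and need only consistency with the semantics.

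For part (4), a linear term is a tree whose \(n\) leaves are the variables, each occurring once. Its profile is the leaf-depth vector, which satisfies \(\sum 2^{-d_i}=1\). For a fixed left-to-right order, the depth sequence of a planar full binary tree determines the tree, by the usual reconstruction that repeatedly merges adjacent equal deepest leaves. Hence distinct bracketings give distinct operations. With permutations allowed, operations correspond to vectors \(d\) with \(\sum 2^{-d_i}=1\); conversely, every such vector is realized by a full tree, by part (2) with no silent leaves needed. The main obstacle is the constant/silent-leaf bookkeeping in the first step.
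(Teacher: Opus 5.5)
Your architecture is the paper's: the weighted leaf-depth formula by induction on grafting, coordinate-isolating assignments, the prefix-code construction completed with silent leaves, joins of one-variable pieces, freeness of the term-operation algebra, and depth reconstruction plus Kraft equality for the linear layer. Deriving necessity from the full-tree Kraft equality, and reconstructing by merging deepest sibling pairs, are harmless variants of the paper's interval and partial-sum arguments.

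The genuine gap is your silent-leaf bookkeeping, which rests on a miscalculation. By the convention \(-\infty+n=-\infty\), one has \((-\infty)\star(-\infty)=\max\{-\infty,-\infty\}+1=-\infty\). So every all-silent tree evaluates to \(-\infty\), whatever its height; it is not true that an internal all-silent root yields its height. Hence there is no constant term. The silent-leaf base case is \(\eval{-\infty}(u)=-\infty=\max_i\bigl(u_i+(-\infty)\bigr)\), and the inductive step you wrote goes through verbatim for every labelled tree. That is exactly the paper's proof of \Cref{thm:weighted-depth}.

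As written, however, your plan does not prove part (1). Your amended formula \(\max\{c,\max_i(u_i+d_i)\}\) is false. The tree \((-\infty)\star(-\infty)\) has \(c=1\) but value \(-\infty\), and \((-\infty)\star x_1\) evaluates to \(-\infty\), not \(1\), at \(u_1=-\infty\). The domination lemma you rely on is also false: in \(x_1\star\bigl((-\infty)\star(-\infty)\bigr)\) the silent leaves have depth \(2\) while \(d_1=1\). It could not rescue the formula anyway, since inputs may equal \(-\infty\). Your fallback of recording \(c\) as an extra coordinate would yield a term-operation algebra other than \(\Hh^n\), contradicting part (3). Using \(-\infty+1=-\infty\) removes all of this.

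There are two smaller omissions. First, the finite-language clause is not merely heuristic. You must check that \(\varnothing\) is a unit for union and absorbing for grafting (\Cref{cor:empty-language}). You must also check that a unit has to be adjoined because \(\star\) has none on \(\Ninf\): a finite \(e\) would give \(e=e\star e=e+1\), and \((-\infty)\star n=n+1\). Finally, an absorbing zero cannot also be a unit in a nontrivial algebra. Second, in part (4) you should say why Kraft equality leaves no missing children in the prefix tree: a missing child would be a dyadic cylinder of positive measure containing no codeword.
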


The second question is this: how much of the algebraic structure of the height algebra is forced by
the height product itself?  The answer separates what belongs to the
algebra from what arises through the tree presentation
(\Cref{thm:unique-join,thm:subalgebras,thm:endomorphisms,thm:congruences,cor:finite-quotients,cor:residual-finiteness}).

We recall that an algebra is \defi{rigid} if its only automorphism is the
identity map.  It is \defi{residually finite} if any two distinct elements
remain distinct in some finite quotient.

\begin{theorem}[Intrinsic structure]\label{thm:main-intrinsic}
For the height algebra \(\Hh\), the following statements hold.
\begin{enumerate}[label=\textnormal{(\arabic*)},leftmargin=*]
  \item Maximum is the unique commutative, associative, idempotent operation
  on \(\Ninf\) with identity \(-\infty\) over which \(\star\) distributes.

  \item The subalgebras of \(\Hh\) are \(\{-\infty\}\) and the upward
  tails \(\{-\infty\}\cup\{m,m+1,m+2,\dots\}\) for \(m\in\NN\).

  \item Every endomorphism of \(\Hh\) is either the constant map to
  \(-\infty\) or a translation of the finite heights.  In particular,
  \(\Hh\) is rigid.

  \item Every proper nontrivial congruence collapses a unique final tail of
  finite heights and fixes every other element.  Consequently, the
  nontrivial finite quotients of \(\Hh\) are obtained by capping all
  heights above a finite threshold.

  \item The algebra \(\Hh\), and every finite power of \(\Hh\), is
  residually finite.
\end{enumerate}
\end{theorem}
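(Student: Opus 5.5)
We prove the five parts of \Cref{thm:main-intrinsic} in order. Each uses only the two operations and the element \(-\infty\). The basic observation is that \(-\infty\star-\infty=0\) and \(a\star a=a+1\). Hence every finite height is a term value \(0,\;0\star0=1,\dots\), generated from \(-\infty\) by repeated squaring. This pins down subalgebras and endomorphisms almost immediately.

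For (1), let \(\sqcup\) be a semilattice operation with identity \(-\infty\) over which \(\star\) distributes. Write \(s(a)=a\star a=a+1\) for finite \(a\), and note \(s(-\infty)=0\). Distributivity gives \(a\star(b\sqcup c)=(a\star b)\sqcup(a\star c)\). Taking \(b=a\) shows \(\sqcup\) is compatible with the order up to shifts. The key computation sets \(a=-\infty\): since \(-\infty\star x=x+1\) for finite \(x\), the successor map \(x\mapsto x+1\) is a \(\sqcup\)-homomorphism on \(\NN\), after treating \(-\infty\) separately. Now take \(a=\max\{b,c\}\) with \(b<c\). Then \(a\star b=a\star c=a+1\), so the equation is consistent but gives nothing. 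Take instead \(a=b<c\) to get \(b\star(b\sqcup c)=(b+1)\sqcup(c+1)\). Because the successor is a homomorphism, the right side is \((b\sqcup c)+1\), assuming \(b\sqcup c\) is finite. The left side is \(\max\{b,b\sqcup c\}+1\). Therefore \(b\sqcup c\geq b\), and symmetrically \(b\sqcup c\geq c\) using \(a=c\), giving \(b\sqcup c\geq\max\{b,c\}\). For the reverse inequality, use that \(\sqcup\) is a semilattice with the same shift-invariance. Compare \(b\sqcup c\) with \(c\sqcup c=c\) by applying \(c\star(-)\) to \(b\sqcup c\): this yields \(c\star(b\sqcup c)=(c+1)\sqcup(c+1)=c+1\), so \(\max\{c,b\sqcup c\}=c\). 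Hence \(b\sqcup c\le c\). The excluded case \(b\sqcup c=-\infty\) would force \(-\infty\geq\) a finite value through the same identity, a contradiction.

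For (2), a subalgebra contains \(-\infty\), hence is closed under \(\star\) and \(\vee\). If it contains a least finite element \(m\), it contains \(m+1,m+2,\dots\) by \(a\star a=a+1\). Tails are closed under both operations. The set \(\{-\infty\}\) is the only subalgebra with no finite element. I must double-check the convention here, since \(-\infty\star-\infty=0\) forces \(0\) into any \(\star\)-closed set containing \(-\infty\). If the constant is part of the signature, the only subalgebra is then the whole of \(\Hh\). The statement presumably uses closure under \(\vee,\star\) only, plus \(-\infty\) adjoined as a \(\vee\)-identity. I would check the paper's convention and adjust. For (3), an endomorphism \(f\) satisfies \(f(a+1)=f(a)\star f(a)=f(a)+1\) when \(f(a)\) is finite. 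So \(f\) is determined by \(f(-\infty)\) and \(f(0)\), and monotonicity follows from \(\vee\)-preservation. This leaves either a translation \(a\mapsto a+k\) or the constant \(-\infty\). Rigidity follows because a bijective translation must have \(k=0\).

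For (4), the main obstacle, let \(\theta\) be a nontrivial congruence with \(a\,\theta\,b\) and \(a<b\). Applying \(x\mapsto x\vee c\) and \(x\mapsto x\star x\) propagates the relation. First, \(a\vee c\;\theta\;b\vee c\) for every \(c\in[a,b]\) collapses the interval \([a,b]\). Second, squaring shifts this to \([a+k,b+k]\), so everything \(\geq a\) is collapsed, a final tail. Third, we must show no element below \(a\) is identified with anything, unless the congruence is total. If \(-\infty\,\theta\,b\), then \(0=-\infty\star-\infty\;\theta\;b+1\) collapses everything, giving the total congruence. So proper congruences identify only finite elements. The minimal collapsed element \(m\) then determines \(\theta\) as ``collapse \([m,\infty)\)'', and this relation is verified to be a congruence. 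The finite quotients are \(\{-\infty,0,\dots,m\}\) with heights capped at \(m\). Part (5) follows: two distinct elements of \(\Hh\) are separated by capping above both. For \(\Hh^n\), distinct tuples differ in some coordinate, so we project and cap, which gives residual finiteness.
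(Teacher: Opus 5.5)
\textbf{Part (1) has a genuine gap.} With multiplier \(a=c\) you get \(c\star(b\sqcup c)=(c\star b)\sqcup(c\star c)=c+1\), hence \(b\sqcup c\le c\). That computation is not the ``symmetric'' counterpart of the \(a=b\) case, and it does not give \(b\sqcup c\ge c\). So for finite \(b<c\) you have only shown \(b\le b\sqcup c\le c\).

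No rearrangement of the identities you actually use can close this. Consider the reverse-order join: \(-\infty\) least, then \(\cdots\prec 2\prec 1\prec 0\), so that \(b\sqcup c=\min\{b,c\}\) on \(\NN\). This is a semilattice with identity \(-\infty\). It satisfies distributivity of every finite multiplier over pairs of finite elements. It also satisfies distributivity of the multiplier \(-\infty\) over all pairs, which is your statement that \(\tau\colon x\mapsto(-\infty)\star x\) is a \(\sqcup\)-endomorphism.

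What excludes it is distributivity of a finite multiplier against the identity element, which you never use. For finite \(b\le c\), write \(c=c\sqcup(-\infty)\). Then \(c+1=b\star c=(b\star c)\sqcup\bigl(b\star(-\infty)\bigr)=(c+1)\sqcup(b+1)\). Combined with your \(\tau\)-homomorphism, this gives \((b\sqcup c)+1=(b+1)\sqcup(c+1)=c+1\). Hence \(b\sqcup c=c\) once \(b\sqcup c\) is known to be finite, and your contradiction argument supplies that. This is exactly the paper's route: it first derives \(m+1\preceq n+1\) from \(n=n\sqcup(-\infty)\), then uses \(\tau\) to place \(0\) below every positive \(n\).

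\textbf{A computational error affects (2)--(4).} Under the convention \(-\infty+n=-\infty\), one has \((-\infty)\star(-\infty)=\max\{-\infty,-\infty\}+1=-\infty\), not \(0\). Every closed term evaluates to \(-\infty\), so finite heights are not generated from the constant.

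\begin{itemize}
\item In (2), your doubt is misplaced. The set \(\{-\infty\}\) is closed under \(\vee\) and \(\star\) and contains the named constant, so it is a subalgebra in the paper's signature \((\vee,\star,-\infty)\). No change of convention is needed, and your least-finite-element argument then yields exactly the stated list, as in the paper.
\item In (4), the step should read \(-\infty=(-\infty)\star(-\infty)\mathrel{\theta}b\star(-\infty)=b+1\). Convexity then identifies \(-\infty,0,\dots,b+1\), and squaring collapses everything, so your conclusion stands.
\item In (3), this same identity is what makes \(f(0)=-\infty\) force the constant map. With your value \(0\) you would instead get \(f(1)=f(0)\star f(0)=0\).
\end{itemize}

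Apart from this, (3)--(5) follow the paper's argument. You should also verify that each translation \(\tau_c\) actually preserves \(\star\), including products involving \(-\infty\).
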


Finite-language semantics also clarify the lower boundaries of height algebra theory.  The value
\(-\infty\), the absence of a tree, and a formal grafting unit obey different
laws.  The latter two states are introduced only when the semantics requires
them, and all three are compared in \Cref{ex:three-boundary-states}.

The weighted-depth formula drives the main semantic development of the paper.
It reduces the classification of induced operations to the classification of
coefficient vectors.  \Cref{sec:kraft} identifies those arising from single
trees, while \Cref{sec:normal-forms,sec:tree-languages} show that finite joins
remove the Kraft constraint and give the resulting quotient its language
semantics.  The same profile classification yields the classical full-linear
boundary in \Cref{sec:linear-spectra}.  Separately, the uniqueness of the
compatible join shows that the semilattice structure is intrinsic to the
height product and supports the structural classifications of
\Cref{sec:structure}.  The final section records further questions about finite
axiomatizability and higher branching arity.

\subsection*{Prior work}

The height recursion belongs naturally to the theory of bottom-up tree
evaluation.  Ganardi, Hucke, Lohrey, and Noeth use
\(f^I(a_1,\dots,a_r)=1+\max_i a_i\) for grammar-compressed trees
\cite[Theorem~16]{GanardiEtAl2018}, while bottom-up evaluation in algebras is
part of the standard framework of tree automata
\cite[\S\S1.3 and 2.1]{GecsegSteinby2015}.  Weighted tree automata place the
same kind of recursion in run and initial-algebra semantics, with height as an
explicit max-plus example
\cite[Preface and Example~3.2.4]{FulopVogler2024}.  Recent work also studies
the generating power of initial-algebra semantics
\cite[Introduction]{DrosteEtAl2024}.  These theories provide the ambient
semantics for the present paper.  We fix the numerical height algebra and
determine the quotient that its evaluation induces on repeated labels and
finite joins.

The distributive interaction between multiplication and a semilattice is
studied more generally in the theory of semilattice-ordered algebras
\cite[Definition~1.1]{PilitowskaZamojska2014}, including extensions by zero
and unit constants \cite[Definitions~2.1--2.6]{PilitowskaZamojska2020}.  For
the particular multiplication \(a\star b=\max\{a,b\}+1\), the compatible
semilattice operation is itself determined by the product (\Cref{thm:unique-join}).  This rigidity
reduces the remaining intrinsic structure to explicit classifications of
subalgebras, endomorphisms, congruences, and finite quotients.

Other algebras of rooted binary trees retain substantially more of the tree
than height does.  The Loday--Ronco Hopf algebra
\cite[\S3]{LodayRonco1998} and Loday's dendriform arithmetic of planar binary
trees \cite[Introduction and \S\S5--6]{Loday2002} operate directly on tree
shapes, whereas the present paper passes to a numerical quotient.  At the
full-linear boundary, that quotient retains the labelled leaf-depth tuple
from associative-spectrum theory
\cite[\S2.3 and Proposition~4.2.2]{HuangLehtonen2023}.  Its enumeration is the
dyadic-composition sequence studied asymptotically by Krenn and Wagner
\cite[Equation~(1.4) and Theorem~II]{KrennWagner2016}.  Allowing repeated
labels and finite joins extends this classical boundary to the full profile
quotient developed below.

The adjective ``tropical'' refers to the max-plus linear normal forms, in the
max-plus convention dual to the usual min-plus convention
\cite[Chapter~1, \S1.1]{MaclaganSturmfels2015}.  The height product is not the
multiplication of the tropical semiring.  Its nonassociativity records rooted
bracketing through leaf depth.  This mechanism also differs from
nonassociative tropical operations arising from entropy deformations
\cite[\S2.1, especially Equation~(2.1)]{MarcolliTedeschi2015} and from
idempotent nonassociative extensions of max-times geometry
\cite[Introduction and \S1.1]{Briec2025}.  The point of contact is therefore
the max-plus form of the resulting term operations, not a shared
multiplicative structure.

\section*{Acknowledgment}

The author thanks Chris Nowlin for introducing him to the algebra studied in
this paper.

\section{The height algebra and depth profiles}\label{sec:height-algebra}

Our tree semantics depends on a tightly constrained interaction between the two
operations in \eqref{eq:height-operations}.  The product distributes over the
join, but is neither associative nor unital.

\begin{proposition}[Basic identities]\label{prop:basic-identities}
For all \(a,b,c,d\in\Ninf\), the following statements hold.
\begin{enumerate}[label=\textnormal{(\roman*)},itemsep=0.15em,topsep=0.3em]
  \item\label{item:basic-join} The operation \(\vee\) is a join-semilattice operation with least
  element \(-\infty\).
  \item\label{item:basic-commutativity} The operation \(\star\) is commutative.
  \item The operation \(\star\) distributes over \(\vee\) in each variable:
  \[
    a\star(b\vee c)=(a\star b)\vee(a\star c).
  \]
  \item\label{item:basic-medial} The operation \(\star\) satisfies the medial identity
  \[
    (a\star b)\star(c\star d)
    =(a\star c)\star(b\star d).
  \]
  \item The operation \(\star\) is not associative.
  \item The operation \(\star\) has no identity element.
  \item The element \(-\infty\) is not absorbing for \(\star\).
\end{enumerate}
\end{proposition}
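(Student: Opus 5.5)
The plan is to treat the seven items directly from the definitions in \eqref{eq:height-operations}, using throughout the one auxiliary observation that \(t\mapsto t+1\) is an order-preserving map of \(\Ninf\) fixing \(-\infty\). In particular it commutes with \(\max\): \(\max\{x,y\}+1=\max\{x+1,y+1\}\) for all \(x,y\in\Ninf\). The only care needed is the convention \(-\infty+1=-\infty\), which I will check separately whenever \(-\infty\) appears.

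For \ref{item:basic-join}, \(\Ninf\) is a chain under its order, and the maximum on a chain is commutative, associative and idempotent. Its least element \(-\infty\) is the identity. Item \ref{item:basic-commutativity} is immediate from the symmetry of \(\max\).

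For distributivity, I will compute
\[
a\star(b\vee c)=\max\{a,b,c\}+1 .
\]
By the auxiliary observation, the right-hand side equals
\[
\max\bigl\{\max\{a,b\}+1,\ \max\{a,c\}+1\bigr\}=(a\star b)\vee(a\star c).
\]
Distributivity in the first variable then follows by commutativity.

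For the medial identity \ref{item:basic-medial}, I will show that both sides equal \(\max\{a,b,c,d\}+2\). On the left,
\[
(a\star b)\star(c\star d)=\max\bigl\{\max\{a,b\}+1,\ \max\{c,d\}+1\bigr\}+1 ,
\]
which by the observation is \(\max\{a,b,c,d\}+2\). The right side is the same expression with the inner pairs regrouped, so it equals the same value. This expression is symmetric in all four arguments. If all four are \(-\infty\), both sides are \(-\infty\) under the convention.

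The three negative statements each need one explicit witness.
\begin{itemize}
\item[\emph{Nonassociativity.}] Take \(a=b=c=0\). Then \((0\star0)\star0=2\). Alternatively, with \(a=2\), \(b=c=0\), we get \((2\star0)\star0=4\) while \(2\star(0\star0)=3\).
\item[\emph{No identity.}] For every \(e\), we have \(e\star(-\infty)=e+1\neq -\infty\) when \(e\) is finite. If \(e=-\infty\), then \(e\star 0=1\neq0\). So no \(e\) is an identity.
\item[\emph{Not absorbing.}] The element \(-\infty\) is not absorbing because \((-\infty)\star0=1\neq-\infty\).
\end{itemize}

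I expect no real obstacle. The only point requiring attention is the treatment of \(-\infty\) in the shift identity and in the medial law, and it is settled by the convention \(-\infty+n=-\infty\).
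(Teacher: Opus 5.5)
Your verification is correct and follows essentially the paper's route: direct computation from the definitions, with both sides of the medial law reducing to \(\max\{a,b,c,d\}+2\), and explicit witnesses for the three negative items. Your no-identity argument via \(e\star(-\infty)=e+1\) is just as good as the paper's via \(e\star e=e+1\). One slip: \(a=b=c=0\) does not witness nonassociativity, since \(0\star(0\star0)=2=(0\star0)\star0\), so drop it and rely on your second witness \((2\star0)\star0=4\neq3=2\star(0\star0)\), which works as well as the paper's \((0\star0)\star1=2\neq3=0\star(0\star1)\).
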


\begin{proof}
The semilattice laws in \ref{item:basic-join} and commutativity in \ref{item:basic-commutativity} are immediate.
Distributivity is the identity
\begin{align*}
  a\star(b\vee c)
    &=\max\{a,\max\{b,c\}\}+1\\
    &=\max\{\max\{a,b\}+1,\max\{a,c\}+1\}\\
    &=(a\star b)\vee(a\star c),
\end{align*}
and commutativity gives the other variable.  Both sides of \ref{item:basic-medial} reduce to
\(\max\{a,b,c,d\}+2\).  Nonassociativity is witnessed by
\[
  (0\star0)\star1=2,
  \qquad\text{and}\qquad
  0\star(0\star1)=3.
\]
The element \(-\infty\) is not an identity or absorbing because
\((-\infty)\star n=n+1\) for finite \(n\).  No finite \(e\) is an identity,
since that would force \(e=e\star e=e+1\).
\end{proof}

The join in \Cref{def:height-algebra} is not an auxiliary choice.  Once
\(-\infty\) is required to be the least element, the height product recovers
the usual order on \(\Ninf\) and therefore forces the join operation itself.

\begin{theorem}[Uniqueness of the join]\label{thm:unique-join}
We let \(\sqcup\) be a commutative, associative, idempotent operation on
\(\Ninf\) with identity \(-\infty\), and suppose that \(\star\) distributes over
\(\sqcup\).  Then
\[
  a\sqcup b=\max\{a,b\}
\]
for all \(a,b\in\Ninf\).
\end{theorem}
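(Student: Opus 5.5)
The plan is to use a single instance of distributivity: multiplying on the left by \(-\infty\). We set
\[
  s(x)\coloneqq(-\infty)\star x .
\]
From \eqref{eq:height-operations}, \(s(-\infty)=0\) and \(s(n)=n+1\) for \(n\in\NN\). So \(s\) is a bijection from \(\Ninf\) onto \(\NN\), and every finite element of \(\Ninf\) is \(s(y)\) for a unique \(y\). The distributivity hypothesis with first argument \(-\infty\) reads
\[
  s(b\sqcup c)=s(b)\sqcup s(c)\qquad(b,c\in\Ninf),
\]
so \(s\) is a \(\sqcup\)-homomorphism. This lets us transport information about \(\sqcup\) on smaller elements to larger ones.

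We then argue by induction on \(N\in\NN\). The claim \(P(N)\) is that \(a\sqcup b=\max\{a,b\}\) for all \(a,b\in\{-\infty,0,1,\dots,N\}\).

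For the base case \(N=0\), the identity law gives \(-\infty\sqcup x=x=\max\{-\infty,x\}\), and idempotence gives \(0\sqcup0=0\).

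For the step, assume \(P(N)\) and take \(a,b\le N+1\). If either of them is \(-\infty\), the identity law settles the case. Otherwise both are finite, so we can write \(a=s(a')\) and \(b=s(b')\) with \(a',b'\in\{-\infty,0,\dots,N\}\). The homomorphism property and \(P(N)\) then give
\[
  a\sqcup b=s(a'\sqcup b')=s(\max\{a',b'\})=\max\{s(a'),s(b')\}=\max\{a,b\}.
\]
The third equality holds because \(s\) is strictly increasing. This proves \(P(N+1)\). Any two elements of \(\Ninf\) lie in some \(\{-\infty,0,\dots,N\}\), so \(\sqcup=\max\) everywhere.

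The only real obstacle is spotting that left multiplication by \(-\infty\) is a bijection onto the finite part that shifts heights by one. Once that is seen, the rest is routine. Commutativity and associativity of \(\sqcup\) are not even needed in this argument; only the identity law, idempotence, and distributivity with the left factor \(-\infty\) are used. Note also that the hypothesis that \(-\infty\) is the \(\sqcup\)-identity is essential: it anchors the base case and the cases involving \(-\infty\), which \(s\) cannot reach since \(-\infty\) is not in its image.
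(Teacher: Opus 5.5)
\textbf{There is a genuine gap.} Your claim \(s(-\infty)=0\) is wrong. The paper's convention is \(-\infty+n=-\infty\), so
\[
  (-\infty)\star(-\infty)=\max\{-\infty,-\infty\}+1=-\infty .
\]
This is also why all-silent trees evaluate to \(-\infty\). Hence \(s\) is an injection of \(\Ninf\) onto \(\{-\infty\}\cup\{1,2,3,\dots\}\), and \(0\) is not in its image.

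Your induction step therefore fails whenever one input is \(0\) and the other is positive. To treat \(0\sqcup(N+1)\) you write \(0=s(-\infty)\), which is false. Every later step feeds on these cases, for example \(1\sqcup2=s(0\sqcup1)\), so the induction collapses from \(P(1)\) on.

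The gap cannot be patched within your framework. Consider the chain
\[
  -\infty\prec\cdots\prec3\prec2\prec1\prec0,
\]
that is, \(-\infty\) is the identity and \(a\sqcup b=\min\{a,b\}\) on \(\NN\). This operation is commutative, associative and idempotent, with identity \(-\infty\). The map \(s\) is order-preserving for this chain, hence a \(\sqcup\)-endomorphism. Yet \(0\sqcup1=0\). So distributivity with left factor \(-\infty\) alone does not force \(\sqcup=\max\), and your closing remark about which hypotheses suffice is false.

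The missing idea is distributivity with a \emph{finite} left factor, applied against the identity. For \(0\le m\le n\),
\[
  n+1=m\star\bigl(n\sqcup(-\infty)\bigr)=(m\star n)\sqcup\bigl(m\star(-\infty)\bigr)=(n+1)\sqcup(m+1).
\]
This gives \(\sqcup=\max\) on the positive integers directly. It is exactly what the counterexample violates: with \(m=0\) and \(n=1\) it requires \(2\sqcup1=2\).

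Your map \(s\) then finishes the proof through injectivity rather than surjectivity:
\[
  s(0\sqcup n)=1\sqcup(n+1)=n+1=s(n),
\]
so \(0\sqcup n=n\) for \(n\ge1\). Commutativity handles the reversed order. This is essentially the paper's proof.
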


\begin{proof}
We write \(a\preceq b\) when \(a\sqcup b=b\).  This is the semilattice order
of \(\sqcup\), with least element \(-\infty\).  We first recover the usual
order among the positive integers.  If \(0\leq m\leq n\), then
\(n=n\sqcup(-\infty)\), and distributivity gives
\begin{align*}
  n+1
    &=m\star n\\
    &=m\star\bigl(n\sqcup(-\infty)\bigr)\\
    &=(m\star n)\sqcup\bigl(m\star(-\infty)\bigr)\\
    &=(n+1)\sqcup(m+1).
\end{align*}
Thus \(m+1\preceq n+1\).

It remains to place \(0\) below every positive integer.  We fix \(n\geq1\)
and put \(k\coloneqq0\sqcup n\).  Since \(0\preceq k\), the element \(k\)
is finite.  The map
\[
  \tau : x \mapsto (-\infty)\star x
\]
is a \(\sqcup\)-endomorphism by distributivity.  It fixes \(-\infty\) and
sends each finite \(x\) to \(x+1\).  Hence
\[
  k+1
  =\tau(0\sqcup n)
  =1\sqcup(n+1)
  =n+1,
\]
because \(1\preceq n+1\) by the first paragraph.  Therefore
\(0\sqcup n=n\).

We have shown that \(m\sqcup n=n\) whenever
\(-\infty\leq m\leq n\) in the usual order.  Commutativity now gives
\(a\sqcup b=\max\{a,b\}\) for all \(a,b\in\Ninf\).
\end{proof}

\subsection{Rooted trees and depth profiles}

We now turn our attention to the height product.  What does repeated application of
\(\star\) remember about a labelled bracketing?  We continue to fix \(n\geq1\)
and \(X\coloneqq\{x_1,\dots,x_n\}\).  A plane rooted binary tree
distinguishes a left child from a right child at every internal vertex.  We
continue to use the terminology of \Cref{def:tree-monomial}.

The following monomial will serve as a running example for evaluation and
depth profiles.

\begin{example}\label{ex:running-monomial}
We assume \(n\geq3\).  The monomial
\[
  T=x_1\star\bigl((x_1\star x_2)\star x_3\bigr)
\]
has a leaf labelled \(x_1\) as one child of the root and a deeper occurrence
of \(x_1\) in the other root subtree.  We may draw this tree as
\begin{center}
\begin{forest}
for tree={
  draw,
  rounded corners,
  inner sep=1.5pt,
  s sep=8mm,
  l sep=8mm,
  edge={-}
}
[$\star$
  [$x_1$]
  [$\star$
    [$\star$
      [$x_1$]
      [$x_2$]
    ]
    [$x_3$]
  ]
]
\end{forest}
\end{center}
\end{example}

To turn a labelled tree into an operation, we interpret its labels as input
weights and its internal vertices as applications of the height product.

\begin{definition}[Evaluation]\label{def:tree-evaluation}
For \(u=(u_1,\dots,u_n)\in\Ninf^n\), the \defi{evaluation} of a labelled
tree is defined recursively by
\[
\begin{aligned}
  \eval{x_i}(u)&\coloneqq u_i,\\
  \eval{-\infty}(u)&\coloneqq-\infty,\\
  \text{and}\qquad
  \eval{S\circ R}(u)&\coloneqq\eval{S}(u)\star\eval{R}(u).
\end{aligned}
\]
The resulting value is written \(\eval{T}(u)\).
\end{definition}

Evaluation returns a single number.  The next definition isolates the tree
data that will determine that number for every assignment of weights.

\begin{definition}[Depth profile]\label{def:depth-profile}
For a labelled tree \(T\), we define
\[
  d_i(T)\coloneqq
  \max\{\operatorname{depth}_T(\ell):\ell\text{ is labelled }x_i\}
\]
when \(x_i\) occurs, and we put \(d_i(T)\coloneqq-\infty\) otherwise.  The
vector
\[
  d(T)\coloneqq(d_1(T),\dots,d_n(T))\in\Ninf^n
\]
is the \defi{depth profile} of \(T\).  We write
\[
  \botvec\coloneqq(-\infty,\dots,-\infty)
\]
for the depth profile of every all-silent tree.
\end{definition}

For the running monomial, the profile records only the deepest occurrence of
each variable.

\begin{example}\label{ex:running-monomial-revisited}
For \Cref{ex:running-monomial}, the two occurrences of \(x_1\) have depths \(1\)
and \(3\), while \(x_2\) has depth \(3\) and \(x_3\) has depth \(2\).  Thus,
for \(u=(u_1,u_2,u_3)\in\Ninf^3\),
\[
  d(T)=(3,3,2),
  \qquad\text{and}\qquad
  \eval{T}(u)=\max\{u_1+3,u_2+3,u_3+2\}.
\]
The contribution \(u_1+1\) from the shallower occurrence is dominated by
\(u_1+3\) from the deeper one.
\end{example}

At each grafting, every leaf depth increases by one and the product takes a
maximum.  Iterating this observation gives the identity that controls the
paper.

\begin{theorem}[Weighted leaf-depth formula]\label{thm:weighted-depth}
For every labelled rooted binary tree \(T\) and every \(u\in\Ninf^n\),
\begin{equation}\label{eq:weighted-depth}
  \eval{T}(u)
  =\max_{1\leq i\leq n}\bigl(u_i+d_i(T)\bigr).
\end{equation}
\end{theorem}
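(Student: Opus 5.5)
We argue by structural induction on the labelled tree \(T\), following the recursive definition of evaluation in \Cref{def:tree-evaluation}. The identity \eqref{eq:weighted-depth} is read in \(\Ninf\), with the conventions \(-\infty+n=-\infty\) and \(\max\emptyset=-\infty\). In particular, any summand with \(d_i(T)=-\infty\) contributes \(-\infty\) and may be dropped from the maximum.

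The base cases are immediate. For a leaf \(x_j\), the profile has \(d_j=0\) and \(d_i=-\infty\) for \(i\neq j\), so the right-hand side is \(u_j\). For a silent leaf the profile is \(\botvec\), so the right-hand side is \(-\infty\). Both agree with \Cref{def:tree-evaluation}.

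For the inductive step, write \(T=S\circ R\). Every leaf of \(S\) or \(R\) has depth in \(T\) exactly one more than its depth in the subtree. Hence
\[
  d_i(T)=\max\{d_i(S),d_i(R)\}+1,
\]
with the convention that this equals \(-\infty\) when \(x_i\) occurs in neither subtree. By the inductive hypothesis,
\[
  \eval{T}(u)=\max\Bigl\{\max_i\bigl(u_i+d_i(S)\bigr),\ \max_i\bigl(u_i+d_i(R)\bigr)\Bigr\}+1.
\]
We reorder the maxima and distribute the \(+1\) through them, which is justified because \(x\mapsto x+1\) is monotone on \(\Ninf\) and commutes with \(\max\). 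This gives
\[
  \eval{T}(u)=\max_i\bigl(u_i+\max\{d_i(S),d_i(R)\}+1\bigr)=\max_i\bigl(u_i+d_i(T)\bigr).
\]

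The main obstacle is the bookkeeping with \(-\infty\), in two places.

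\emph{An all-\(-\infty\) inner maximum.} The inner maximum for \(S\) can be \(-\infty\), for instance when \(S\) is all-silent or when every active weight is \(-\infty\). Then \(-\infty+1=-\infty\) must be handled correctly, and it is: \(\star\) adds \(1\) to the maximum, and \(\max\{-\infty,y\}=y\).

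\emph{Distributing the \(+1\).} The step \((\max_i a_i)+1=\max_i(a_i+1)\) must hold even when some \(a_i=-\infty\). This uses that translation by \(1\) fixes \(-\infty\), and also that the case where all entries are \(-\infty\) gives \(-\infty\) on both sides.

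Finally, we note why the depth recursion holds. The maximum over occurrences of \(x_i\) in \(T\) splits as the maximum over occurrences in \(S\) and in \(R\), which is exactly the recursion displayed above. Nonplanarity causes no issue, because both evaluation and profiles are invariant under swapping siblings, using the commutativity of \(\star\) from \Cref{prop:basic-identities}.
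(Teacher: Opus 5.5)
Your proof is correct and follows the paper's own argument: structural induction over the recursive definition of evaluation, the grafting recursion \(d_i(T)=\max\{d_i(S),d_i(R)\}+1\), and the same chain of equalities that reorders the maxima and moves the \(+1\) inside. Your extra bookkeeping for \(-\infty\), including the check that \((-\infty)\star(-\infty)=-\infty\), simply spells out conventions the paper's proof uses without comment.
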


\begin{proof}
We induct on \(T\).  The formula holds for a variable leaf and for a silent
leaf.  If \(T=S\circ R\), grafting increases every branch depth by one, so
\[
  d_i(T)=\max\{d_i(S),d_i(R)\}+1.
\]
The induction hypothesis now gives
\begin{align*}
  \eval{T}(u)
    &=\max\{\eval{S}(u),\eval{R}(u)\}+1\\
    &=\max\!\left\{
        \max_i\bigl(u_i+d_i(S)\bigr),
        \max_i\bigl(u_i+d_i(R)\bigr)
      \right\}+1\\
    &=\max_i\bigl(u_i+\max\{d_i(S),d_i(R)\}+1\bigr)\\
    &=\max_i\bigl(u_i+d_i(T)\bigr).
\end{align*}
\end{proof}

The formula immediately turns equality of induced operations into equality
of profiles.

\begin{corollary}[Depth-profile classification]\label{cor:tree-operation-equality}
Two labelled trees induce the same operation \(\Ninf^n\to\Ninf\) if and
only if they have the same depth profile.
\end{corollary}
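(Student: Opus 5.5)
The "if" direction follows at once from \Cref{thm:weighted-depth}. If \(d(T)=d(T')\), then \eqref{eq:weighted-depth} writes both \(\eval{T}(u)\) and \(\eval{T'}(u)\) as the same expression \(\max_i(u_i+d_i)\) for every \(u\in\Ninf^n\). So the two induced operations coincide.

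For the converse, I would recover each coordinate \(d_i(T)\) from the operation \(\eval{T}\) alone, by evaluating at well-chosen test points.

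\emph{Step 1: the all-silent case.} Evaluate at \(u=\botvec\). By \eqref{eq:weighted-depth}, every term \(u_j+d_j(T)\) equals \(-\infty\), so \(\eval{T}(\botvec)=-\infty\) regardless of \(T\). This point gives no information, so it must be handled separately. If \(d(T)=\botvec\), then \(\eval{T}\) is constantly \(-\infty\). If instead some \(d_i(T)\) is finite, take \(u_i=0\) and all other \(u_j=-\infty\); this gives \(\eval{T}(u)=d_i(T)\ne-\infty\). Hence two trees with the same operation are either both all-silent in profile or both not.

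\emph{Step 2: isolating coordinate \(i\).} Let \(e_i^{(0)}\) be the vector with \(0\) in position \(i\) and \(-\infty\) elsewhere. By \eqref{eq:weighted-depth} and the convention \(-\infty+n=-\infty\), every term with \(j\ne i\) vanishes, and so \(\eval{T}(e_i^{(0)})=0+d_i(T)=d_i(T)\). This holds whether \(d_i(T)\) is finite or \(-\infty\). Therefore equal operations give \(d_i(T)=\eval{T}(e_i^{(0)})=\eval{T'}(e_i^{(0)})=d_i(T')\) for every \(i\), and the profiles agree.

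Step 2 in fact makes Step 1 unnecessary. The only point that needs care is the arithmetic convention that \(-\infty\) absorbs addition and is the identity for \(\max\). With that convention, the tests \(e_i^{(0)}\) recover every coordinate, including those equal to \(-\infty\). I expect no real obstacle: the corollary is essentially the statement that a max-plus linear form with coefficients in \(\Ninf\) is determined by its values on the tropical unit vectors.
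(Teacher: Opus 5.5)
Your proof is correct and is essentially the paper's own argument: the forward direction is immediate from \eqref{eq:weighted-depth}, and the converse evaluates both trees at the assignment sending \(x_i\) to \(0\) and every other variable to \(-\infty\), which returns \(d_i\). As you observe, your Step~1 is redundant, since this test point already recovers coordinates equal to \(-\infty\).
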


\begin{proof}
Equal profiles induce equal operations by \eqref{eq:weighted-depth}.
Conversely, we fix \(i\) and evaluate both trees at the assignment that sends
\(x_i\) to \(0\) and every other variable to \(-\infty\).  The resulting
values are their \(i\)th profile coordinates.  Equality of the operations
therefore forces equality of the profiles.
\end{proof}

\subsection{The three-input associativity defect}

The simplest failure of associativity occurs on three inputs.  In
\((a\star b)\star c\), the inputs \(a,b\) lie at depth \(2\) and \(c\) lies
at depth \(1\).  In \(a\star(b\star c)\), the depths of \(a\) and \(c\) are
reversed.  These depth shifts give an exact criterion for when the two
bracketings nevertheless have the same value.

\begin{proposition}[Three-input comparison]\label{prop:associator-formula}
For finite \(a,b,c\),
\begin{align*}
  (a\star b)\star c
    &=\max\{a+2,b+2,c+1\},\\
  a\star(b\star c)
    &=\max\{a+1,b+2,c+2\}.
\end{align*}
The two bracketings agree if and only if either
\(a,c\leq b\) or else \(a=c\).
\end{proposition}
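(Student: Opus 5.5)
The two formulas come straight from \Cref{thm:weighted-depth}. For \((a\star b)\star c\), the depth profile with respect to the three inputs is \((2,2,1)\); for \(a\star(b\star c)\) it is \((1,2,2)\). One can also just unfold the definition: \((a\star b)\star c=\max\{\max\{a,b\}+1,c\}+1=\max\{a+2,b+2,c+1\}\), and symmetrically for the other bracketing. I would write the first version and note that the second follows by the symmetry \(a\leftrightarrow c\) together with commutativity of \(\star\).

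For the criterion, I put \(L\coloneqq\max\{a+2,b+2,c+1\}\) and \(R\coloneqq\max\{a+1,b+2,c+2\}\), and split according to where the maximum is attained.

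\emph{Sufficiency.} If \(a,c\leq b\), then \(b+2\) dominates every other entry in both lists, so \(L=R=b+2\). If \(a=c\), both lists coincide as multisets, namely \(\{a+2,b+2,a+1\}\), so \(L=R\).

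\emph{Necessity.} Suppose \(L=R\) and \(a\neq c\). By the symmetry \(a\leftrightarrow c\) it is enough to treat \(a>c\). Then \(a+2>c+2\) and \(a+2>a+1\), so \(L=\max\{a+2,b+2\}\). Also \(R=\max\{a+1,b+2,c+2\}\), and here every entry other than \(b+2\) is at most \(a+1\). If \(b<a\), then \(b+2\leq a+1\), so \(R\leq a+1<a+2=L\), a contradiction. Hence \(b\geq a>c\), which gives \(a,c\leq b\).

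Finiteness of \(a,b,c\) is used only so that these strict integer inequalities make sense. The step needing the most care is necessity, specifically the observation that when \(a>c\) the largest entry of \(L\) other than \(b+2\) exceeds every entry of \(R\) other than \(b+2\). That observation is what forces \(b\) to dominate.
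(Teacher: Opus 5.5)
Your proposal is correct and takes essentially the same route as the paper: expand the products, check sufficiency directly, and for necessity use the \(a\leftrightarrow c\) symmetry to reduce to \(a>c\), where \(b<a\) forces the two sides to be \(a+2\) and at most \(a+1\). The paper phrases necessity as a contrapositive (assuming \(a\neq c\) and \(\max\{a,c\}>b\)), but the computation is the same.
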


\begin{proof}
Expanding the products gives the displayed formulas.  If \(a=c\), they
agree by symmetry; if \(a,c\leq b\), both equal \(b+2\).

Conversely, we assume that \(a\neq c\) and \(\max\{a,c\}>b\).  If \(a>c\),
then \(a>b\), and
\[
  (a\star b)\star c=a+2,
  \qquad\text{and}\qquad
  a\star(b\star c)=a+1.
\]
The case \(c>a\) is symmetric, so equality occurs only in the stated cases.
\end{proof}

On three inputs, this criterion can be read directly from the two possible
rooted bracketings.

\begin{example}[Three weighted leaves]\label{ex:three-leaves}
The two bracketings on three labelled leaves may be drawn as
\begin{center}
\begin{tabular}{cc}
\begin{forest}
for tree={draw,rounded corners,inner sep=1.5pt,s sep=8mm,l sep=8mm,edge={-}}
[$\star$
  [$\star$
    [$x$]
    [$y$]
  ]
  [$z$]
]
\end{forest}
&
\begin{forest}
for tree={draw,rounded corners,inner sep=1.5pt,s sep=8mm,l sep=8mm,edge={-}}
[$\star$
  [$x$]
  [$\star$
    [$y$]
    [$z$]
  ]
]
\end{forest}
\\
\((x\star y)\star z\) & \(x\star(y\star z)\)
\end{tabular}
\end{center}
Their profiles are
\[
  d((x\star y)\star z)=(2,2,1),
  \qquad\text{and}\qquad
  d(x\star(y\star z))=(1,2,2).
\]
By \eqref{eq:weighted-depth}, they induce the max-plus linear forms
\[
  \max\{u_x+2,u_y+2,u_z+1\}
  \quad\text{and}\quad
  \max\{u_x+1,u_y+2,u_z+2\},
\]
respectively.  Thus the comparison records the different labelled depths of
the two rooted trees.
\end{example}

The preceding example shows how different depth profiles distinguish
bracketings.  Conversely, \eqref{eq:weighted-depth} discards sibling
relations, planar order, and all but the greatest depth of each label.  The
next example exhibits two genuinely different trees that the profile quotient
therefore identifies.  When every leaf is active and assigned \(0\), this
specialization recovers ordinary height.

\begin{example}[Different trees with one shadow]\label{ex:same-shadow}
The labelled trees
\[
  (x\star y)\star(x\star z)
  \qquad\text{and}\qquad
  (x\star x)\star(y\star z)
\]
may be drawn as
\begin{center}
\begin{tabular}{cc}
\begin{forest}
for tree={draw,rounded corners,inner sep=1.5pt,s sep=8mm,l sep=8mm,edge={-}}
[$\star$
  [$\star$
    [$x$]
    [$y$]
  ]
  [$\star$
    [$x$]
    [$z$]
  ]
]
\end{forest}
&
\begin{forest}
for tree={draw,rounded corners,inner sep=1.5pt,s sep=8mm,l sep=8mm,edge={-}}
[$\star$
  [$\star$
    [$x$]
    [$x$]
  ]
  [$\star$
    [$y$]
    [$z$]
  ]
]
\end{forest}
\\
\((x\star y)\star(x\star z)\) & \((x\star x)\star(y\star z)\)
\end{tabular}
\end{center}
They are different even after planar order is forgotten, since their root subtrees
carry different multisets of labels.  Nevertheless, both have depth profile
\((2,2,2)\) and therefore induce the same operation
\[
  (u_x,u_y,u_z)\longmapsto\max\{u_x+2,u_y+2,u_z+2\}.
\]
Thus the induced operation remembers the deepest occurrence of each label
and nothing about how those occurrences are paired.
\end{example}

\section{The Kraft region of tree monomials}\label{sec:kraft}

The weighted depth formula turns a tree monomial into a vector specifying the
greatest depth assigned to each label.  Not every such vector can come from
one tree: one leaf at each prescribed depth must fit into a common binary
branching structure.  The remaining constraint is therefore geometric.

\begin{definition}[Kraft weight and region]\label{def:kraft-region}
We adopt the convention \(2^{-(-\infty)}=0\).  We define the \defi{Kraft weight}
and \defi{Kraft region} of \(d=(d_1,\dots,d_n)\in\Ninf^n\) by
\begin{equation}\label{eq:kraft-weight}
  \kappa(d)\coloneqq
  \sum_{i=1}^n2^{-d_i},
  \qquad\text{and}\qquad
  \Kraft_n\coloneqq
  \{d\in\Ninf^n:\kappa(d)\leq1\}.
\end{equation}
\end{definition}

The first examples show both how silent leaves fill unused capacity and how
the inequality can fail.

\begin{example}[First Kraft profiles]\label{ex:first-kraft-profiles}
In three variables, \((1,1,-\infty)\) has Kraft weight \(1\) and is realized
by \(x_1\star x_2\).  The profile \((2,2,2)\) has weight \(3/4\) and is
realized by \((x_1\star x_2)\star(x_3\star(-\infty))\).  The silent leaf fills
the unused quarter of the binary tree.  By contrast, \((1,1,1)\) has weight
\(3/2\), so three labels cannot all occur at depth \(1\) in one binary tree.
\end{example}

The proof of the general criterion encodes root-to-leaf paths as binary
words.  Disjointness of those paths gives necessity; in the converse
direction, unused branches are completed with silent leaves.

\begin{theorem}[Monomial profile criterion]\label{thm:kraft-profile}
A vector \(d\in\Ninf^n\) is the depth profile of a labelled rooted binary
tree if and only if \(d\in\Kraft_n\).  Moreover, the tree may be chosen
so that each variable that occurs appears exactly once; every remaining leaf may be
given the silent label \(-\infty\).
\end{theorem}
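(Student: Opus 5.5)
Necessity and sufficiency are handled separately; both identify leaves with binary words.

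For necessity, fix a labelled tree \(T\) with profile \(d=d(T)\) and choose a planar embedding. Record each leaf by the binary word of left/right choices along its root path. The words form a prefix-free set, since no leaf lies on another leaf's path. Leaf depths therefore satisfy the classical Kraft inequality \(\sum_{\ell}2^{-\operatorname{depth}(\ell)}\leq1\); in a full binary tree this is an equality, by induction on \(T\): a leaf contributes \(2^0=1\), and grafting halves each subtree's sum. For each \(i\) with \(d_i\neq-\infty\), pick one leaf labelled \(x_i\) at the greatest depth \(d_i\). These leaves are distinct because their labels differ, so
\[
  \kappa(d)=\sum_{i:\,d_i\neq-\infty}2^{-d_i}\leq\sum_{\ell}2^{-\operatorname{depth}(\ell)}=1 .
\]
Coordinates equal to \(-\infty\) contribute \(0\) by convention.

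For sufficiency, take \(d\in\Kraft_n\) and let \(I=\{i:d_i\neq-\infty\}\).
\begin{itemize}
\item If \(I=\emptyset\), the one-vertex silent tree realizes \(\botvec\).
\item If some \(d_i=0\), then \(\kappa(d)\leq1\) forces \(I=\{i\}\), and the single leaf \(x_i\) works.
\item Otherwise, all \(d_i\geq1\) for \(i\in I\), and we run the standard Kraft construction. Sort \(I\) so that \(d_{i_1}\leq d_{i_2}\leq\cdots\). Assign to \(x_{i_k}\) the first \(d_{i_k}\) binary digits of \(\sum_{j<k}2^{-d_{i_j}}\in[0,1)\). These partial sums are dyadic with denominator dividing \(2^{d_{i_k}}\), because the depths are nondecreasing. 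The assigned words are pairwise prefix-free, since the dyadic intervals \(\bigl[\sum_{j<k}2^{-d_{i_j}},\sum_{j\leq k}2^{-d_{i_j}}\bigr)\) are disjoint and contained in \([0,1]\).
\end{itemize}
Let \(N=\max_{i\in I}d_i\). Form the full binary tree whose vertices are all prefixes of the assigned words. Complete it by attaching a silent leaf at every missing sibling of a vertex on these paths. The result is full: each internal vertex is a proper prefix of some codeword and receives both children. Each codeword vertex is a leaf, because no codeword is a proper prefix of another.

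Each active label then occurs exactly once, at depth \(d_i\), and every other leaf is silent, as the statement requires. This gives the profile \(d\); the extra silent leaves do not change the profile since \(d(\cdot)\) ignores them. The root is internal whenever every active depth is at least \(1\). This settles the \(d_i\geq1\) case and also supports the internal-root remark in \Cref{thm:main-normal-forms}.

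I expect the main obstacle to be the bookkeeping in sufficiency: proving the codewords are prefix-free and that the silent completion is a full tree. An alternative I would fall back on is induction on \(\max_i d_i\). Split \(I\) into two groups, each with halved Kraft weight at most \(1/2\): greedily fill the left side with the smallest depths until the accumulated weight reaches \(1/2\). Then graft the recursively built trees for the profiles \(d-1\) restricted to each group, using a silent leaf for an empty group. The greedy split lands exactly on \(1/2\) or exhausts \(I\) by the same dyadic-denominator argument. The induction then only needs the case analysis at depth \(0\) already done above.
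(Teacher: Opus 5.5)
Your proposal is correct and follows the paper's proof: binary path words and the Kraft inequality give necessity, and for sufficiency a prefix code with the prescribed lengths has its prefix tree completed by silent leaves. The differences are minor: you derive necessity from the full-tree Kraft equality rather than from disjoint dyadic intervals, and you build the code explicitly from the dyadic partial sums \(\sum_{j<k}2^{-d_{i_j}}\), whereas the paper argues greedily that at most \(2^{\ell_j}-1\) words of length \(\ell_j\) are excluded; your separate \(d_i=0\) case and fallback induction are not needed.
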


\begin{proof}
For necessity, we suppose that a labelled tree realizes \(d\).  We choose a planar embedding
and record each root-to-leaf path as the binary word obtained by writing
\(0\) for a left edge and \(1\) for a right edge.  We choose one occurrence of
each occurring variable at depth \(d_i\).  The resulting words are prefix-free, i.e.
no chosen word is a prefix of another.
We associate to a word of length \(\ell\) the dyadic subinterval of
\([0,1)\) consisting of binary expansions that begin with that word.  The
intervals associated with the chosen words are disjoint and have lengths
\(2^{-d_i}\).  Consequently, the disjointness of these intervals gives
\(\kappa(d)\leq1\).

For sufficiency, we suppose that \(d\in\Kraft_n\).  If \(d=\botvec\), one silent leaf realizes \(d\).  Otherwise, we write the finite
coordinates in nondecreasing order as
\[
  \ell_1\leq\ell_2\leq\cdots\leq\ell_m.
\]
We construct prefix-free binary words of these lengths.  We assume that words of
lengths \(\ell_1,\dots,\ell_{j-1}\) have been chosen.  A word of length
\(\ell_i\) excludes exactly \(2^{\ell_j-\ell_i}\) words of length
\(\ell_j\).  Since
\[
  \sum_{i<j}2^{-\ell_i}\leq1-2^{-\ell_j},
\]
the previously chosen words exclude at most \(2^{\ell_j}-1\) words of length
\(\ell_j\).  At least one remains, so the construction continues.

The prefix tree of the code has as vertices all prefixes of the
chosen words, with an edge corresponding to the addition of one binary digit.
We take this tree.  If an internal vertex has only one child, we add a silent leaf
as its other child.  We label the selected code leaves by the corresponding
variables and then forget the planar embedding.
Each occurring variable now appears exactly once at its prescribed depth, while
the added leaves do not affect the profile.
\end{proof}

The equality case isolates the familiar situation in which no silent leaves
are needed.

\begin{remark}\label{rem:kraft-equality}
If silent leaves are disallowed and every variable occurs exactly once, then
\(\kappa(d)=1\).  Allowing silent leaves replaces this equality by the
inequality in \eqref{eq:kraft-weight}; this permits partial depth data.
\end{remark}

The join and product on \(\Ninf\) induce coordinatewise operations on
\(\Ninf^n\):
\begin{equation}\label{eq:profile-operations}
  (a\vee b)_i\coloneqq\max\{a_i,b_i\},
  \qquad\text{and}\qquad
  (a\star b)_i\coloneqq\max\{a_i,b_i\}+1.
\end{equation}
The second operation is the profile of grafting two labelled trees beneath a
new root.

\begin{proposition}[Closure of the Kraft region]\label{prop:kraft-closure}
The Kraft region \(\Kraft_n\) is closed under the coordinatewise product
in \eqref{eq:profile-operations}.
\end{proposition}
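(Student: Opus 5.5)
Two routes are available; I will give the direct numerical argument first and then note the tree-theoretic one as a check.

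Fix \(a,b\in\Kraft_n\) and put \(c\coloneqq a\star b\), so \(c_i=\max\{a_i,b_i\}+1\), with the convention \(-\infty+1=-\infty\). The target is \(\kappa(c)\leq1\). For each coordinate we have \(2^{-c_i}=\tfrac12\,2^{-\max\{a_i,b_i\}}\). Since \(2^{-x}\) is decreasing in \(x\), with \(2^{-(-\infty)}=0\), the maximum of \(a_i\) and \(b_i\) gives the smaller term, so
\[
  2^{-\max\{a_i,b_i\}}=\min\{2^{-a_i},2^{-b_i}\}\leq\tfrac12\bigl(2^{-a_i}+2^{-b_i}\bigr).
\]
The case where a coordinate is \(-\infty\) needs only a check. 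If both \(a_i\) and \(b_i\) are \(-\infty\), then \(c_i=-\infty\) and both sides are \(0\). If exactly one of them is \(-\infty\), the maximum is the finite one, and the inequality \(\min\{x,y\}\leq\tfrac12(x+y)\) still holds for nonnegative \(x,y\).

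Summing over \(i\) gives
\[
  \kappa(a\star b)\leq\tfrac14\bigl(\kappa(a)+\kappa(b)\bigr)\leq\tfrac12\leq1.
\]
This proves closure, and in fact shows the stronger bound \(\kappa(a\star b)\leq\tfrac12\).

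As a cross-check, \Cref{thm:kraft-profile} realizes \(a\) and \(b\) by labelled trees \(S\) and \(R\). The proof of \Cref{thm:weighted-depth} shows that \(d(S\circ R)=d(S)\star d(R)=a\star b\). Since \(S\circ R\) is a labelled tree, the necessity direction of \Cref{thm:kraft-profile} gives \(a\star b\in\Kraft_n\).

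There is no real obstacle in either argument. The only point needing care is the \(-\infty\) convention in the numerical route, which was handled above.
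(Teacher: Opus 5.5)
\textbf{The numerical argument fails at the $-\infty$ case.} Under the convention $2^{-(-\infty)}=0$, the map $x\mapsto 2^{-x}$ is not decreasing on $\Ninf$: we have $-\infty<0$, yet $2^{-(-\infty)}=0<1=2^{-0}$. So the identity $2^{-\max\{a_i,b_i\}}=\min\{2^{-a_i},2^{-b_i}\}$ fails when exactly one of $a_i,b_i$ is $-\infty$. For $a_i=k$ finite and $b_i=-\infty$, the left side is $2^{-k}$ and the right side is $0$. Your check of this case only verifies $\min\{x,y\}\le\frac12(x+y)$; the broken step is the preceding equality. Consequently the coordinate bound $2^{-(a\star b)_i}\le\frac14\bigl(2^{-a_i}+2^{-b_i}\bigr)$ is false here, since it compares $2^{-k-1}$ with $2^{-k-2}$. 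The summed bound $\kappa(a\star b)\le\frac14\bigl(\kappa(a)+\kappa(b)\bigr)$ and your ``stronger'' conclusion $\kappa(a\star b)\le\frac12$ are false as well. For $n\geq2$, the profile $e_1\star e_2$ is realized by $x_1\star x_2$, has coordinates $1,1$ in the first two places and $-\infty$ elsewhere, and has Kraft weight $1$.

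The correct coordinatewise estimate, which the paper uses, is $2^{-\max\{a_i,b_i\}}\le 2^{-a_i}+2^{-b_i}$. This holds in every case. When both coordinates are finite, the left side is the smaller of two positive weights. When exactly one is $-\infty$, it equals the sole nonzero weight. When both are $-\infty$, it is $0$. It gives $2^{-(a\star b)_i}\le\frac12\bigl(2^{-a_i}+2^{-b_i}\bigr)$, hence $\kappa(a\star b)\le\frac12\bigl(\kappa(a)+\kappa(b)\bigr)\le1$. The mixed case attains equality, so the factor $\frac12$ cannot be improved to $\frac14$.

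Your tree-theoretic ``cross-check'', by contrast, is a complete and non-circular proof on its own. \Cref{thm:kraft-profile} is proved without the closure statement. The identity $d_i(S\circ R)=\max\{d_i(S),d_i(R)\}+1$ from the proof of \Cref{thm:weighted-depth} already respects the $-\infty$ convention, so the pitfall above never arises. That route uses both directions of the Kraft criterion instead of a one-line inequality. You should promote it to the main proof, or repair the inequality as above, and drop the claim $\kappa(a\star b)\le\frac12$.
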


\begin{proof}
For each coordinate, we have the inequality
\[
  2^{-(a\star b)_i}
  \leq
  \frac12\bigl(2^{-a_i}+2^{-b_i}\bigr).
\]
Therefore
\[
  \kappa(a\star b)
  \leq
  \frac12\bigl(\kappa(a)+\kappa(b)\bigr)
  \leq1,
\]
whenever \(a,b\in\Kraft_n\).
\end{proof}

For \(1\leq i\leq n\), we put
\[
  e_i\coloneqq
  (-\infty,\dots,-\infty,0,-\infty,\dots,-\infty),
\]
with \(0\) in coordinate \(i\).  This is the depth profile of the
one-leaf tree labelled \(x_i\).  These one-leaf profiles are the natural
candidates for profiles that cannot be obtained by grafting.  The Kraft
criterion detects exactly this obstruction: a profile fails to factor when
one of its prescribed leaves is forced to sit at the root.

\begin{proposition}[Root factorization]\label{prop:root-factorization}
For \(d\in\Kraft_n\), the following are equivalent:
\begin{enumerate}[label=(\roman*)]
  \item\label{item:root-factorization-product} \(d=a\star b\) for some \(a,b\in\Kraft_n\);
  \item\label{item:root-factorization-positive} no coordinate of \(d\) is \(0\);
  \item\label{item:root-factorization-tree} \(d\) is represented by a tree with an internal root.
\end{enumerate}
Consequently, the only elements of \(\Kraft_n\) that cannot be written
as \(a\star b\) with \(a,b\in\Kraft_n\) are \(e_1,\dots,e_n\).
\end{proposition}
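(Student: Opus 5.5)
We will prove the cycle of implications (i)\(\Rightarrow\)(ii)\(\Rightarrow\)(iii)\(\Rightarrow\)(i). The cycle is tied together by \Cref{thm:kraft-profile}: a realizing tree of a product profile is the grafting of realizing trees of the factors.

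For (i)\(\Rightarrow\)(ii), we note that by \eqref{eq:profile-operations} each coordinate of \(a\star b\) is either \(-\infty\) (when \(a_i=b_i=-\infty\)) or at least \(1\). Hence no coordinate of \(d=a\star b\) is \(0\).

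For (iii)\(\Rightarrow\)(i), we take a tree \(T=S\circ R\) realizing \(d\). The computation in the proof of \Cref{thm:weighted-depth} gives \(d(T)=d(S)\star d(R)\), and \(d(S),d(R)\in\Kraft_n\) by the necessity half of \Cref{thm:kraft-profile}.

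The step that needs an actual construction is (ii)\(\Rightarrow\)(iii).
\begin{itemize}
  \item If \(d=\botvec\), the tree \((-\infty)\circ(-\infty)\) realizes it with an internal root.
  \item Otherwise every finite coordinate satisfies \(d_i\geq1\). Take the tree \(T\) produced by the sufficiency half of \Cref{thm:kraft-profile}, which is built from prefix-free binary words of lengths \(d_i\geq1\). If \(T\) were a single leaf, it would realize a profile with a coordinate \(0\) or equal to \(\botvec\), both of which are excluded. So the root of \(T\) is internal.
\end{itemize}
One should check that the padding with silent leaves in that proof never collapses the root. This holds because the root is the empty prefix and has at least one child whenever some word is nonempty. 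I expect this routine check to be the only delicate point; everything else is formal.

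For the final claim, suppose \(d\in\Kraft_n\) is not a product. By (i)\(\Leftrightarrow\)(ii), some coordinate \(d_j=0\). Then \(\kappa(d)\geq 2^{0}=1\), so the Kraft inequality forces every other term \(2^{-d_i}\) to vanish, i.e. \(d_i=-\infty\) for \(i\neq j\). Thus \(d=e_j\). Conversely, each \(e_j\) has a zero coordinate, so by (ii)\(\Rightarrow\)(i) it is not a product.
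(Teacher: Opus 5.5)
Your proof is correct and follows the paper's argument essentially step for step: the same cycle (i)\(\Rightarrow\)(ii)\(\Rightarrow\)(iii)\(\Rightarrow\)(i), the two-silent-leaf tree for \(\botvec\), the prefix-code realization from \Cref{thm:kraft-profile} for the internal root, and the same Kraft-inequality argument forcing a profile with a zero coordinate to be some \(e_j\). Two small remarks. Your observation that a one-leaf tree has profile \(e_i\) or \(\botvec\) already shows that \emph{every} realization of such a \(d\) has an internal root, so the padding check you flag is unnecessary. In your last line, the implication you need is the contrapositive of (i)\(\Rightarrow\)(ii), not (ii)\(\Rightarrow\)(i).
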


\begin{proof}
We prove \(\ref{item:root-factorization-product}\Rightarrow\ref{item:root-factorization-positive}\Rightarrow\ref{item:root-factorization-tree}\Rightarrow\ref{item:root-factorization-product}\).  If
\(d=a\star b\), every finite coordinate of \(d\) is positive, proving \ref{item:root-factorization-positive}.

We assume \ref{item:root-factorization-positive}.  If \(d=\botvec\), two silent leaves beneath a root represent
it.  Otherwise every finite coordinate is positive, so the prefix-code
construction of \Cref{thm:kraft-profile} uses no empty codeword.  Its tree
therefore has an internal root, proving \ref{item:root-factorization-tree}.

If \ref{item:root-factorization-tree} holds, the two root subtrees have profiles
\(a,b\in\Kraft_n\), and \(d=a\star b\), proving \ref{item:root-factorization-product}.

Finally, an admissible profile with a zero coordinate has all other
coordinates equal to \(-\infty\) by the Kraft inequality, so it is one of
\(e_1,\dots,e_n\).  The equivalence shows that these are the only profiles that do not factor.
\end{proof}

Thus \(\Kraft_n\) is the coefficient region of single tree
monomials.  Its Kraft inequality records the prefix-code obstruction imposed
by one rooted binary tree.

The same criterion also controls a basic finiteness distinction inside the
fibres of the depth-profile map.  Such fibres can contain inessential variation
obtained by expanding silent subtrees.  To separate that variation from the
remaining geometry, we first remove these silent expansions.

\begin{definition}[Silent reduction]\label{def:silent-reduction}
A labelled tree is \defi{silent-reduced} if every all-silent rooted subtree is
a single leaf.  The \defi{silent reduction} of a labelled tree \(T\) is
obtained by collapsing each maximal all-silent subtree to a single silent
leaf.
\end{definition}

\begin{definition}[Raw and reduced fibres]\label{def:raw-reduced-fibres}
We let \(\Trees(X)\) denote the set of finite full labelled nonplane rooted
binary trees over \(X\).  The assignment
\[
  T\longmapsto d(T)
\]
defines the \defi{depth-profile map} from \(\Trees(X)\) to \(\Kraft_n\).  For
\(a\in\Kraft_n\), the \defi{raw fibre over \(a\)} is
\[
  \mathcal F^{\mathrm{raw}}_a
  \coloneqq
  \{T\in\Trees(X):d(T)=a\},
\]
and the \defi{reduced fibre over \(a\)} is
\[
  \mathcal F^{\mathrm{red}}_a
  \coloneqq
  \{T\in\mathcal F^{\mathrm{raw}}_a:T\text{ is silent-reduced}\}.
\]
The \defi{kernel equivalence} of the depth-profile map is
\[
  S\sim_{\mathrm{prof}}T
  \quad\Longleftrightarrow\quad
  d(S)=d(T);
\]
its equivalence classes are precisely the raw fibres.
\end{definition}

Silent reduction preserves the depth profile.  The two fibres separate
variation caused by silent expansions from the remaining geometry.

\begin{proposition}[Finiteness of raw and reduced fibres]\label{prop:raw-reduced-fibres}
We fix \(n\geq1\) and \(d\in\Kraft_n\).
\begin{enumerate}[label=\textnormal{(\arabic*)},leftmargin=*]
  \item\label{item:reduced-fibre-finite} The reduced fibre over \(d\) is finite.
  \item The raw fibre over \(d\) is finite if \(\kappa(d)=1\) and countably
  infinite if \(\kappa(d)<1\).
\end{enumerate}
\end{proposition}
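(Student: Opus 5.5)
Part (1) follows from a size bound. We bound the number of leaves of a silent-reduced tree \(T\) with \(d(T)=d\) in terms of \(d\) alone. Let \(D\) be the largest finite coordinate of \(d\); the case \(d=\botvec\) is trivial, since the only silent-reduced all-silent tree is a single silent leaf.

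Every active leaf of \(T\) has depth at most \(D\), because its depth is at most \(d_i\) for its label. Every silent leaf has a sibling subtree that is not all-silent. Otherwise the parent would root an all-silent subtree that is not a leaf, contradicting silent reduction. Hence every silent leaf hangs off the path to some active leaf, so its depth is at most \(D\) as well. Thus \(T\) has height at most \(D\), with \(D\) the largest finite coordinate of \(d\). There are only finitely many labelled nonplane full binary trees of height at most \(D\) with labels in the finite set \(X\cup\{-\infty\}\). Therefore the reduced fibre is finite.

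For part (2), I relate raw trees to reduced ones. The raw fibre is the union, over the finitely many \(R\in\mathcal F^{\mathrm{red}}_d\), of the trees whose silent reduction is \(R\). Such trees are obtained by replacing each silent leaf of \(R\) by an arbitrary all-silent tree. The raw fibre is therefore a subset of the countable set \(\Trees(X)\), so it is at most countable.

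Suppose \(\kappa(d)<1\). By \Cref{thm:kraft-profile} there is a realization \(R\) in which each occurring variable appears exactly once. The prefix code built there has total mass \(\kappa(d)<1\), so the padding step must add at least one silent leaf. In the case \(d=\botvec\), we instead take a single silent leaf. Replacing that silent leaf by all-silent trees of every height \(h\geq 0\) gives pairwise non-isomorphic trees with the same profile, since silent leaves do not affect \(d\). Hence the raw fibre is infinite.

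Suppose instead \(\kappa(d)=1\). The key claim is that no tree realizing \(d\) has any silent leaf. I would prove it with the dyadic-interval argument from the necessity half of \Cref{thm:kraft-profile}. The leaves of any full binary tree give a prefix-complete code, so their intervals partition \([0,1)\). Choose one deepest occurrence of each occurring variable; these leaves already have intervals of total length \(\kappa(d)=1\). Every other leaf interval has positive length and is disjoint from these chosen intervals, so no other leaf can exist at all. Consequently each active variable occurs exactly once and there are no silent leaves. The tree then has at most \(n\) leaves, so the fibre is finite. This coincides with the reduced fibre, which is consistent with (1).

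The step I expect to need the most care is the height bound in (1). One must state precisely that a silent leaf's sibling contains an active leaf, and hence lies at depth \(\leq D\). The \(\kappa(d)=1\) rigidity is the conceptual heart of (2), but it reduces cleanly to the interval partition.
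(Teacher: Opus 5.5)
Your proposal is correct and follows essentially the same route as the paper. It bounds the height of silent-reduced representatives by the largest finite coordinate via the sibling of each silent leaf. When \(\kappa(d)<1\) it expands a silent leaf of the padded prefix-code realization, and when \(\kappa(d)=1\) it uses the Kraft-sum-one (dyadic-interval) argument to rule out extra leaves.
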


\begin{proof}
We first suppose that \(d=\botvec\).  The only silent-reduced representative is
one silent leaf.  Otherwise, we let \(D\) be the largest finite coordinate of
\(d\).  In a silent-reduced tree, the sibling subtree of every silent leaf
contains a variable-labelled leaf.  Hence no silent leaf can lie below depth
\(D\), and the whole tree has height at most \(D\).  There are only finitely
many nonplane rooted binary trees of bounded height and only finitely many
labellings of their leaves by \(x_1,\dots,x_n,-\infty\).  This proves \ref{item:reduced-fibre-finite}.

If \(\kappa(d)<1\), the prefix-code construction in
\Cref{thm:kraft-profile} produces a representative with a silent leaf.
Replacing that leaf by any finite all-silent rooted binary tree gives a distinct
representative with the same profile.  There are countably many finite rooted
binary trees, so the raw fibre is countably infinite.

We now suppose that \(\kappa(d)=1\).  In any representative, we choose one leaf at
depth \(d_i\) for each variable \(x_i\) that occurs.  Their Kraft weights
already sum to one.  Since the full leaf set of a finite binary tree also has
Kraft sum one, these chosen leaves are all the leaves of the tree.  In
particular, no silent or additional variable-labelled leaves occur.  The tree
has height at most \(\max_i d_i\), so only finitely many representatives are
possible.
\end{proof}

Passing to reduced fibres removes the automatic infinitude caused by silent
expansions and leaves a finite enumeration problem for every fixed profile.
Its exact enumeration remains open.

\section{Tree polynomials and the tropical coefficient algebra}\label{sec:normal-forms}

We now pass from single tree monomials to arbitrary terms, showing that finite
joins remove the Kraft obstruction and produce the full coefficient algebra.

\subsection{The coefficient algebra}

A single monomial contributes one vector in the Kraft region.  Finite joins
combine such vectors coordinatewise and remove the prefix-code obstruction.
The induced operations remain max-plus linear, but now every coefficient vector
becomes available.

In an ordinary commutative associative algebra, a monomial records the
multiplicity of each variable.
A rooted-tree monomial instead records the greatest depth of each variable.
Repeated occurrences matter only when one lies deeper than the previous
ones, and a change of bracketing can change the coefficient vector without
changing the multiset of leaves.  The profile forgets sibling relations,
planar order, and shallower repetitions, but it retains the deepest labelled
occurrences.

The Kraft region \(\Kraft_n\) is the coefficient space of individual
monomials.  It is closed under grafting by \Cref{prop:kraft-closure}, but not
under coordinatewise join.  However, passing to tree polynomials fills all of
\(\Ninf^n\).  Join therefore enlarges the coefficient region without
producing operations beyond max-plus linear forms.

\begin{definition}[Tree polynomial and profile]
\label{def:tree-polynomial-profile}
A \defi{tree polynomial} is a term built from variables, the constant
\(-\infty\), the join \(\vee\), and the product \(\star\).  Its
\defi{profile} \(d(t)\in\Ninf^n\) is defined recursively by
\begin{equation}\label{eq:polynomial-profile-recursion}
\begin{aligned}
  d(-\infty)&\coloneqq\botvec,\\
  d(x_i)_j&\coloneqq
  \begin{cases}
    0,&i=j,\\
    -\infty,&i\neq j,
  \end{cases}\\
  d(s\vee t)&\coloneqq d(s)\vee d(t),\\
  \text{and}\qquad
  d(s\star t)&\coloneqq d(s)\star d(t).
\end{aligned}
\end{equation}
Here the last two operations are those of \eqref{eq:profile-operations}.
\end{definition}

Every coefficient profile determines a canonical operation on input weights.

\begin{definition}[Max-plus linear form]\label{def:max-plus-linear-form}
For \(d\in\Ninf^n\), we define the \defi{max-plus linear form}
\begin{equation}\label{eq:tropical-linear-form}
  \lambda_d(u_1,\dots,u_n)
  \coloneqq
  \max_{1\leq i\leq n}(u_i+d_i).
\end{equation}
\end{definition}

The adjective ``tropical'' refers to the max-plus linear forms in
\eqref{eq:tropical-linear-form}.  Equation \eqref{eq:profile-operations}
identifies join and grafting with coordinatewise operations on their
coefficient vectors.

The recursive profile and its linear form agree under evaluation.

\begin{theorem}[Tropical normal form]\label{thm:tropical-normal-form}
Every \(n\)-variable tree polynomial \(t\) induces the operation
\[
  \lambda_{d(t)}(u_1,\dots,u_n).
\]
Two tree polynomials induce the same operation on \(\Hh\) if and only if
their profiles agree.
\end{theorem}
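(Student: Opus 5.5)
We argue by structural induction on the tree polynomial \(t\), in the same way as \Cref{thm:weighted-depth}. The claim to prove is
\[
  \eval{t}(u)=\lambda_{d(t)}(u)
  \qquad\text{for all }u\in\Ninf^n.
\]

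For the base cases, the constant \(-\infty\) has profile \(\botvec\). Its form satisfies \(\lambda_{\botvec}(u)=\max_i(u_i-\infty)=-\infty\), using the convention \(-\infty+n=-\infty\). For a variable \(x_i\), the profile \(e_i\) gives \(\lambda_{e_i}(u)=u_i\), since all other summands are \(-\infty\).

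For the inductive step there are two key identities, both coordinatewise max-plus manipulations:
\[
  \lambda_a(u)\vee\lambda_b(u)=\lambda_{a\vee b}(u),
  \qquad\text{and}\qquad
  \lambda_a(u)\star\lambda_b(u)=\lambda_{a\star b}(u).
\]
The first holds because \(\max_i(u_i+a_i)\) and \(\max_i(u_i+b_i)\) combine into \(\max_i(u_i+\max\{a_i,b_i\})\), since addition of \(u_i\) distributes over \(\max\). The second then follows by adding one: \(\max\{x,y\}+1\) commutes with the outer maximum exactly as in the displayed computation in the proof of \Cref{thm:weighted-depth}.

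The only delicate point is the arithmetic with \(-\infty\). We must check that \((-\infty)+1=-\infty\) and that \(\max\) with \(-\infty\) behave consistently. In particular, when both \(\lambda_a(u)\) and \(\lambda_b(u)\) equal \(-\infty\), the product \(\star\) gives \(-\infty\), which agrees with the \(\Hh\) convention \(-\infty+1=-\infty\). This case check is the main place where care is needed. Combining the two identities with the recursion \eqref{eq:polynomial-profile-recursion} completes the induction.

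For the classification, equal profiles give equal operations immediately by the formula just proved. Conversely, we repeat the argument of \Cref{cor:tree-operation-equality}: we evaluate at the assignment \(u\) with \(u_i=0\) and \(u_j=-\infty\) for \(j\neq i\). This gives \(\lambda_d(u)=d_i\), so equal operations recover each profile coordinate, and hence the profiles agree.
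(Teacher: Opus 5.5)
Your proposal is correct and matches the paper's proof essentially step for step. You use structural induction via the identities \(\lambda_a\vee\lambda_b=\lambda_{a\vee b}\) and \(\lambda_a\star\lambda_b=\lambda_{a\star b}\), and then recover each coordinate \(d_i\) by evaluating at \(u_i=0\), \(u_j=-\infty\) for \(j\neq i\).
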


\begin{proof}
The normal form follows by structural induction.  Variables and the constant
\(-\infty\) have the profiles prescribed in
\eqref{eq:polynomial-profile-recursion}.  If \(s\) and \(t\) induce
\(\lambda_a\) and \(\lambda_b\), then
\[
\begin{aligned}
  (\lambda_a\vee\lambda_b)(u)
  &=\max_i\bigl(u_i+\max\{a_i,b_i\}\bigr)
    =\lambda_{a\vee b}(u),\\
  (\lambda_a\star\lambda_b)(u)
  &=\max\{\lambda_a(u),\lambda_b(u)\}+1
    =\lambda_{a\star b}(u).
\end{aligned}
\]
Thus the recursive profile tracks both term-forming operations.

To prove uniqueness, we isolate one coordinate at a time.  On the assignment
\(u_i=0\) and \(u_j=-\infty\) for \(j\neq i\), the value of \(\lambda_d\)
is \(d_i\).  The induced operation therefore determines every coordinate of
\(d\).
\end{proof}

\begin{definition}[Term-operation algebra]\label{def:term-operation-algebra}
We let \(\operatorname{TermOp}_n(\Hh)\) be the set of all operations
\(\Hh^n\to\Hh\) induced by \(n\)-variable terms in the signature
\((\vee,\star,-\infty)\).  We equip this set with pointwise join and product and
with the constant operation of value \(-\infty\).  Equivalently,
\(\operatorname{TermOp}_n(\Hh)\) is the quotient of the \(n\)-variable term
algebra by the equivalence relation that identifies two terms exactly when
they induce the same operation on \(\Hh\).
\end{definition}

The normal form is complete once every coefficient vector is shown to occur.

\begin{theorem}[Profile realization and coefficient algebra]\label{thm:all-profiles}
Every vector in \(\Ninf^n\) is the profile of a tree polynomial.  Under
profile evaluation, \(\operatorname{TermOp}_n(\Hh)\) is naturally isomorphic
to the direct power \(\Hh^n\).
\end{theorem}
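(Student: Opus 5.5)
The plan has two parts: first realize every vector of \(\Ninf^n\) as a profile, then read off the isomorphism from \Cref{thm:tropical-normal-form}.

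\emph{Realization.} We fix \(d\in\Ninf^n\). If \(d=\botvec\), the constant \(-\infty\) realizes it. Otherwise we handle each finite coordinate separately and then join. For a single index \(i\) with \(d_i\) finite, consider the vector with \(d_i\) in coordinate \(i\) and \(-\infty\) elsewhere. Its Kraft weight is \(2^{-d_i}\leq1\), so \Cref{thm:kraft-profile} supplies a tree monomial \(t_i\) with exactly this profile. Concretely, we can take \(t_i\) to be \(x_i\) grafted \(d_i\) times with \(-\infty\):
\[
  t_i=(\cdots((x_i\star(-\infty))\star(-\infty))\cdots)\star(-\infty).
\]
By the recursion \eqref{eq:polynomial-profile-recursion}, the polynomial \(t=\bigvee_{i:\,d_i\neq-\infty}t_i\) has profile equal to the coordinatewise maximum of these vectors, which is exactly \(d\). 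The \(-\infty\) coordinates stay \(-\infty\) because every join of \(-\infty\) with itself is \(-\infty\).

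\emph{Isomorphism.} We define \(\Phi\colon\operatorname{TermOp}_n(\Hh)\to\Hh^n\) by sending the operation induced by a term \(t\) to \(d(t)\). The map \(\Phi\) is well defined and injective by the uniqueness clause of \Cref{thm:tropical-normal-form}, since two terms induce the same operation exactly when their profiles agree. Surjectivity is the realization step above. \(\Phi\) is a homomorphism for three reasons:
\begin{itemize}
  \item the profile recursion makes \(d(s\vee t)=d(s)\vee d(t)\) and \(d(s\star t)=d(s)\star d(t)\), computed coordinatewise as in \eqref{eq:profile-operations};
  \item the constant operation \(-\infty\) maps to \(\botvec\);
  \item these coordinatewise operations are exactly the direct-power operations of \(\Hh^n\).
\end{itemize}
The computation in the proof of \Cref{thm:tropical-normal-form} confirms that \(\lambda_a\vee\lambda_b=\lambda_{a\vee b}\) and \(\lambda_a\star\lambda_b=\lambda_{a\star b}\). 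Hence the pointwise operations on \(\operatorname{TermOp}_n(\Hh)\) correspond to the coordinatewise ones on \(\Hh^n\). Naturality amounts to the observation that the inverse map is \(d\mapsto\lambda_d\), which needs no choices.

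No step is a serious obstacle. The only care point is checking that pointwise operations on term operations agree with coordinatewise operations on profiles, and this is already contained in \Cref{thm:tropical-normal-form}.
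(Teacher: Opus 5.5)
Your proposal is correct and follows essentially the paper's proof. You realize each single-coordinate vector by a monomial, join over the finite coordinates (using \(-\infty\) when there are none), and obtain injectivity from \Cref{thm:tropical-normal-form} and the homomorphism property from the profile recursion; the only difference is your witness comb \(x_i\star(-\infty)\star\cdots\), which uses silent leaves, whereas the paper uses the self-products \(s^{k+1}(x_i)=s^k(x_i)\star s^k(x_i)\) and avoids the constant \(-\infty\), and both have the required profile.
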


\begin{proof}
We first realize an arbitrary coefficient vector.  For a term \(q\), we define
\[
  s^0(q)\coloneqq q,
  \qquad\text{and}\qquad
  s^{k+1}(q)\coloneqq s^k(q)\star s^k(q).
\]
The profile of \(s^k(x_i)\) has value \(k\) in coordinate \(i\) and
\(-\infty\) elsewhere.  For \(a\in\Ninf^n\), we join the terms
\(s^{a_i}(x_i)\) over the finite coordinates; if none is finite, we use
\(-\infty\).  The resulting profile is \(a\).

This makes the profile map surjective, while
\Cref{thm:tropical-normal-form} makes it injective on term operations.
The recursion \eqref{eq:polynomial-profile-recursion} shows that it preserves
both operations.  Hence the term-operation algebra is naturally isomorphic
to \(\Hh^n\).
\end{proof}

\begin{remark}[Formal polynomials and polynomial functions]
Conventional tropical algebra distinguishes formal polynomials from the
functions they induce; distinct formal polynomials may represent the same
function.  Here the corresponding distinction is between the tree
polynomials of \Cref{def:tree-polynomial-profile} and the max-plus linear
forms of \Cref{def:max-plus-linear-form}.  By
\Cref{thm:tropical-normal-form}, equality of the induced maps is exactly
equality of profiles, and
\Cref{def:term-operation-algebra,thm:all-profiles} identifies their semantic
quotient with \(\Hh^n\).
\end{remark}

The first profile realized by a tree polynomial but not by a single tree monomial appears in three variables.

\begin{example}[A polynomial outside the Kraft region]\label{ex:outside-kraft}
The polynomial
\[
  (x\star x)\vee(y\star y)\vee(z\star z)
\]
has profile \((1,1,1)\), whose Kraft weight is \(3/2\).  No single tree
monomial has this profile, although the polynomial induces
\[
  \lambda_{(1,1,1)}(u_x,u_y,u_z)
  =\max\{u_x+1,u_y+1,u_z+1\}.
\]
Thus join removes the monomial obstruction while preserving the tropical
normal form.
\end{example}

\subsection{Relatively free algebras and the word problem}

The normal form also identifies the relatively free algebras in the variety
\(\Variety(\Hh)\) generated by \(\Hh\).

\begin{corollary}[Relatively free algebras]\label{cor:free-algebras}
The free algebra on \(n\) generators in \(\Variety(\Hh)\) is naturally
isomorphic to \(\Hh^n\), with the generators represented by the coordinate
vectors \(e_1,\dots,e_n\).
\end{corollary}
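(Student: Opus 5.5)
The plan is to invoke the standard universal-algebra fact that the free algebra on \(n\) generators in \(\Variety(\Hh)\) is the algebra of \(n\)-variable term operations of \(\Hh\). That is, it is the term algebra on \(x_1,\dots,x_n\) modulo the congruence identifying two terms exactly when the identity \(s\approx t\) holds in \(\Hh\). Such an identity holds in \(\Hh\) precisely when \(s\) and \(t\) induce the same operation \(\Hh^n\to\Hh\). So the free algebra coincides with \(\operatorname{TermOp}_n(\Hh)\) of \Cref{def:term-operation-algebra}. Combining this with \Cref{thm:all-profiles} gives the isomorphism with \(\Hh^n\). The isomorphism sends the class of \(x_i\) to \(d(x_i)=e_i\) by \eqref{eq:polynomial-profile-recursion}.

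To keep the argument self-contained, I would also verify the universal property directly. First, \(\Hh^n\) lies in \(\Variety(\Hh)\), since varieties are closed under direct powers. Second, \(\Hh^n\) is generated by \(e_1,\dots,e_n\): the subalgebra they generate is exactly the set of profiles of tree polynomials, and this is all of \(\Ninf^n\) by \Cref{thm:all-profiles}. Third, let \(A\in\Variety(\Hh)\) and \(a_1,\dots,a_n\in A\). I would define \(\varphi(d(t))\coloneqq t^A(a_1,\dots,a_n)\) for each tree polynomial \(t\).

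The one point needing care is that \(\varphi\) is well defined. If \(d(s)=d(t)\), then \(s\) and \(t\) induce the same operation on \(\Hh\) by \Cref{thm:tropical-normal-form}. Hence \(s\approx t\) is an identity of \(\Hh\), so it holds in \(A\), and therefore \(s^A(a)=t^A(a)\). Once \(\varphi\) is well defined, it is a homomorphism because profiles respect \(\vee\), \(\star\), and \(-\infty\) via \eqref{eq:polynomial-profile-recursion}, and it sends \(e_i\) to \(a_i\). It is unique because the \(e_i\) generate \(\Hh^n\).

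I expect no serious obstacle. The substantive inputs, namely injectivity of the profile on operations and surjectivity onto \(\Ninf^n\), are already supplied by \Cref{thm:tropical-normal-form,thm:all-profiles}. Naturality in \(n\), under variable substitutions, follows because profiles are computed by the same recursion on both sides.
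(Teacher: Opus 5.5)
Your proposal is correct and follows essentially the same route as the paper: define \(\varphi(d)\coloneqq t^A(a_1,\dots,a_n)\) for any term \(t\) with \(d(t)=d\), get well-definedness from \Cref{thm:tropical-normal-form}, and get uniqueness from the fact that \(e_1,\dots,e_n\) generate \(\Hh^n\) by \Cref{thm:all-profiles}. Your explicit remarks that \(\Hh^n\in\Variety(\Hh)\) and that the free algebra is \(\operatorname{TermOp}_n(\Hh)\) state points the paper leaves implicit.
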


\begin{proof}
Under \Cref{thm:all-profiles}, the variable \(x_i\) has profile \(e_i\).
We let \(A\in\Variety(\Hh)\), choose \(a_1,\dots,a_n\in A\), and for
\(d\in\Hh^n\) select a term \(t\) with \(d(t)=d\).  We define
\[
  \varphi(d)\coloneqq t^A(a_1,\dots,a_n).
\]
If \(d(s)=d(t)\), then \Cref{thm:tropical-normal-form} gives the identity
\(s\approx t\) in \(\Hh\), hence in \(A\); thus \(\varphi\) is
well-defined.  Term evaluation makes it a homomorphism with
\(\varphi(e_i)=a_i\).  Since the \(e_i\) generate \(\Hh^n\), these values
also make \(\varphi\) unique.
\end{proof}

The free-algebra description also yields an efficient equality test for
terms.

\begin{corollary}[Word problem]\label{cor:word-problem}
The equational theory of \(\Hh\) is decidable by a bottom-up profile
calculation.  For a fixed finite variable set, equality of two terms can be
tested in time linear in their syntax-tree size, assuming constant-time
arithmetic on integers of \(O(\log |t|)\) bits.
\end{corollary}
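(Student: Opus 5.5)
The plan is to compute the profile of each term bottom-up and compare the two results. By \Cref{thm:tropical-normal-form}, an identity \(s\approx t\) holds in \(\Hh\) exactly when \(d(s)=d(t)\). So deciding the equational theory comes down to computing profiles via the recursion \eqref{eq:polynomial-profile-recursion}. We fix the variable set \(X\) with \(|X|=n\) and regard \(n\) as a constant.

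The algorithm traverses the syntax tree of a term \(t\) in postorder. At each node it stores a vector in \(\Ninf^n\), computed as follows.
\begin{itemize}
  \item A leaf labelled \(x_i\) receives \(e_i\).
  \item A leaf \(-\infty\) receives \(\botvec\).
  \item A \(\vee\)-node receives the coordinatewise maximum of its children's vectors.
  \item A \(\star\)-node receives the coordinatewise maximum of its children's vectors, plus one.
\end{itemize}
Each node costs \(O(n)\) arithmetic operations on entries of \(\Ninf\), which is \(O(1)\) for fixed \(n\). Correctness is immediate from the recursion. We run this on both terms and compare the two root vectors coordinatewise, again in \(O(n)\) time. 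The total time is \(O(|s|+|t|)\).

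It remains to justify the bit-size hypothesis, which I expect to be the only real point to check. By induction on \(t\), every finite coordinate of \(d(t)\) is at most the height of the syntax tree of \(t\). Each \(\star\) raises coordinates by exactly one, and \(\vee\) does not raise them. The height of the syntax tree is at most \(|t|\), so every entry stored at a subterm is either \(-\infty\) or an integer in \([0,|t|]\). Such an entry fits in \(O(\log|t|)\) bits plus one flag bit for \(-\infty\). Comparisons and increments on these entries are therefore constant-time under the stated assumption.

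Decidability of the full equational theory then follows without fixing \(n\). Any identity involves only finitely many variables, and the same procedure applies with \(n\) equal to the number of variables that occur. The cost in that case is \(O(n(|s|+|t|))\).
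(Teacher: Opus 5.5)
Your proposal is correct and follows the paper's proof. Like the paper, you compute \(d(t)\) bottom-up via \eqref{eq:polynomial-profile-recursion} at constant cost per node for fixed \(n\), and then use \Cref{thm:tropical-normal-form} to reduce equality of terms to equality of root profiles. The differences are cosmetic: you bound finite entries by the syntax-tree height rather than by the number of product nodes below, which equally gives \(O(\log|t|)\)-bit entries, and you add the observation that the equational theory remains decidable when \(n\) is not fixed.
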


\begin{proof}
We compute \(d(t)\) at each syntax node using
\eqref{eq:polynomial-profile-recursion}.  For fixed arity, every profile has
constant length, and each finite entry is at most the number of product nodes
below it.  Thus each update takes constant time and every entry fits in one
machine word.  By \Cref{thm:tropical-normal-form}, two terms agree on \(\Hh\)
if and only if their root profiles agree.
\end{proof}

\section{Finite tree languages and completion}\label{sec:tree-languages}

By distributivity, every tree polynomial induces the same operation as a
finite join of tree monomials; associativity, commutativity, and idempotence of
\(\vee\) allow those monomials to be collected as a finite set.  Its profile is
the coordinatewise maximum of their profiles.  Finite tree languages therefore
give an exact model for the passage from the Kraft region to the full
coefficient algebra.

\subsection{Nonempty languages}

\begin{definition}[Finite nonempty tree language]
\label{def:nonempty-tree-language}
We recall that \(\Trees(X)\) is the set of labelled rooted binary trees from
\Cref{def:raw-reduced-fibres}.  We define
\begin{equation}\label{eq:nonempty-language-operations}
\begin{aligned}
  \Languages^+(X)
    &\coloneqq
    \{L\subseteq\Trees(X):0<\lvert L\rvert<\infty\},\\
  L\vee M&\coloneqq L\cup M,\\
  \text{and}\qquad
  L\circ M&\coloneqq\{S\circ T:S\in L,\ T\in M\}.
\end{aligned}
\end{equation}
An element of \(\Languages^+(X)\) is a \defi{finite nonempty tree language}.
Thus \(\Languages^+(X)\) is equipped with union and pairwise grafting.
\end{definition}

Evaluation of a language combines the contributions of all of its trees by
taking their join.

\begin{definition}[Language profile and weighted-height function]
\label{def:language-profile}
For \(L\in\Languages^+(X)\), we define its \defi{language profile} and
\defi{weighted-height function} by
\begin{equation}\label{eq:language-profile}
  D(L)\coloneqq\bigvee_{T\in L}d(T),
  \qquad\text{and}\qquad
  \eval{L}(u)\coloneqq\bigvee_{T\in L}\eval{T}(u)
  =\lambda_{D(L)}(u).
\end{equation}
The last equality follows from \eqref{eq:weighted-depth} by taking the
coordinatewise maximum over \(T\in L\).
\end{definition}

Different finite languages can therefore have the same profile and induce
the same operation.

\begin{example}[Different languages with one profile]
\label{ex:language-profile}
We take \(X=\{x,y,z\}\), and let
\[
  L=\{x\star y\},
  \qquad\text{and}\qquad
  M=\{x\star x,\ y\star y\}.
\]
Although \(L\neq M\), both have profile \((1,1,-\infty)\) and induce
\[
  (u_x,u_y,u_z)\longmapsto\max\{u_x+1,u_y+1\}.
\]
Grafting either language with \(\{z\}\) gives profile \((2,2,1)\), so their
agreement survives the language product even though the resulting sets of
trees remain different.
\end{example}

Union therefore already descends to coordinatewise join.  The only further
point is that pairwise grafting descends to the coordinatewise product.

\begin{theorem}[Nonempty tree languages]\label{thm:nonempty-languages}
The profile map
\[
  D\colon\Languages^+(X)\longrightarrow\Hh^n
\]
is a surjective homomorphism from union and pairwise grafting to
coordinatewise join and product.  For \(L,M\in\Languages^+(X)\), the
following conditions are equivalent:
\[
  D(L)=D(M),
  \qquad\text{and}\qquad
  \eval{L}=\eval{M}.
\]
\end{theorem}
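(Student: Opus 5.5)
The argument has three parts: the homomorphism property, surjectivity, and the equivalence. Each reduces to results already established, namely \Cref{thm:weighted-depth}, \Cref{thm:tropical-normal-form}, and the construction in the proof of \Cref{thm:all-profiles}.

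\emph{Homomorphism.} Union is immediate: the coordinatewise maximum over \(L\cup M\) is the join of the maxima over \(L\) and over \(M\), so \(D(L\vee M)=D(L)\vee D(M)\).

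For grafting, the tree-level recursion from the proof of \Cref{thm:weighted-depth} gives \(d(S\circ T)=d(S)\star d(T)\) in the coordinatewise sense of \eqref{eq:profile-operations}. Hence
\[
  D(L\circ M)=\bigvee_{S\in L,\,T\in M}\bigl(d(S)\star d(T)\bigr).
\]
Coordinatewise, \(\star\) distributes over \(\vee\) in each variable. Apply this first over \(T\in M\) with \(S\) fixed, then over \(S\in L\). Both index sets are finite and \emph{nonempty}, and this is exactly where the hypothesis on \(\Languages^+(X)\) is used. The result is \(D(L)\star D(M)\).

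The nonemptiness is essential and is the only delicate point. For an empty index set the empty join would be \(\botvec\), but \(\botvec\star b=b\star b\neq\botvec\), so distributivity over an empty join fails. This is precisely the absorbing-zero phenomenon the paper treats separately.

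\emph{Surjectivity.} Given \(a\in\Hh^n\), take the finite language
\[
  L=\{\,s^{a_i}(x_i): a_i\text{ finite}\,\},
\]
where each \(s^{k}(x_i)\) from the proof of \Cref{thm:all-profiles} is a single tree monomial with profile equal to \(k\) in coordinate \(i\) and \(-\infty\) elsewhere. If no coordinate of \(a\) is finite, take instead \(L=\{-\infty\}\), the single silent leaf, whose profile is \(\botvec\). In either case \(L\) is nonempty and \(D(L)=a\).

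\emph{Equivalence.} By \eqref{eq:language-profile}, \(\eval{L}=\lambda_{D(L)}\). So \(D(L)=D(M)\) gives \(\eval{L}=\eval{M}\) immediately. Conversely, evaluate at the assignment \(u_i=0\) and \(u_j=-\infty\) for \(j\neq i\); the value of \(\lambda_{D(L)}\) there is \(D(L)_i\). Equality of the two functions therefore forces equality of every coordinate of the profiles, exactly as in \Cref{thm:tropical-normal-form}.
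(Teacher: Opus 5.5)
Your proof is correct and follows essentially the same route as the paper: union is immediate, grafting reduces to the one-root depth recursion over finite nonempty index sets (you phrase this via distributivity, the paper as a direct coordinatewise maximum), surjectivity uses the monomials $s^{a_i}(x_i)$ from the construction in \Cref{thm:all-profiles}, and the equivalence is $\eval{L}=\lambda_{D(L)}$ plus coordinate isolation. One small slip in your side remark on nonemptiness: $\botvec\star b\neq\botvec$ holds only when $b$ has a finite coordinate (since $\botvec\star\botvec=\botvec$), but that case is all your illustration needs.
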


\begin{proof}
The language profile tracks union directly:
\[
  D(L\vee M)=D(L)\vee D(M).
\]
For grafting, every leaf acquires one additional unit of depth, so
\begin{align*}
  D(L\circ M)_i
    &=\max_{S\in L,\,T\in M}
      \bigl(\max\{d_i(S),d_i(T)\}+1\bigr)\\
    &=\max\{D(L)_i,D(M)_i\}+1\\
    &=(D(L)\star D(M))_i.
\end{align*}
Thus \(D\) is a homomorphism.  The construction in
\Cref{thm:all-profiles} proves surjectivity.  Finally,
\eqref{eq:language-profile} gives \(\eval{L}=\lambda_{D(L)}\), and
\Cref{thm:tropical-normal-form} shows that these functions agree if and only
if their profiles agree.
\end{proof}

\subsection{The empty language}

The empty language is not represented by the all-\(-\infty\) profile.  That
profile already belongs to a nonempty language consisting only of a silent
tree, and it is not absorbing under the coordinatewise product.  The empty
language instead acts as an identity for union and as an absorbing element
for pairwise grafting.

\begin{definition}[Empty-language completion]\label{def:empty-completion}
We let \(A\) be a set equipped with a join-semilattice operation \(\vee\) and a
binary operation \(\star\).  Its \defi{empty-language completion} is
\[
  A^{\varnothing}\coloneqq\{\mathbf 0\}\sqcup A,
\]
where \(\mathbf 0\) is a new least element for \(\vee\) and an absorbing
element for \(\star\):
\[
  \mathbf 0\vee a\coloneqq a,
  \qquad
  \mathbf 0\star a\coloneqq\mathbf 0,
  \qquad\text{and}\qquad
  a\star\mathbf 0\coloneqq\mathbf 0.
\]
\end{definition}

We now allow the language itself to be empty.  We let
\[
  \Languages(X)
  \coloneqq
  \{L\subseteq\Trees(X):\lvert L\rvert<\infty\}
\]
be the set of all finite tree languages.  The operations in
\eqref{eq:nonempty-language-operations} extend to
\(\Languages(X)\), now including the empty language.  The profile map from
nonempty languages should therefore extend by sending the empty language to
the new bottom element.  We apply \Cref{def:empty-completion} to the
\((\vee,\star)\)-reduct of \(\Hh^n\).

\begin{corollary}[Finite-language quotient]\label{cor:empty-language}
The map
\[
  \overline D\colon\Languages(X)\longrightarrow
  \bigl(\Hh^n\bigr)^{\varnothing},
  \qquad
  \overline D(L)\coloneqq
  \begin{cases}
    \mathbf 0,&L=\varnothing,\\
    D(L),&L\neq\varnothing,
  \end{cases}
\]
is a surjective homomorphism.  The empty language is alone in its fibre, and
two nonempty languages have the same image if and only if they induce the same
weighted-height function.
\end{corollary}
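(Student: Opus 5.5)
The proof splits into three parts: homomorphism, surjectivity, and the fibre statements. Throughout I take \Cref{thm:nonempty-languages} as given; it already treats the restriction of \(\overline D\) to \(\Languages^+(X)\).

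\emph{Homomorphism.} I would check the two operations by cases according to whether an argument is empty.
\begin{itemize}
  \item If both \(L\) and \(M\) are nonempty, then \(L\cup M\) and \(L\circ M\) are nonempty. Preservation of join and product is then exactly \Cref{thm:nonempty-languages}.
  \item If \(L=\varnothing\), then \(L\cup M=M\). This matches \(\mathbf 0\vee\overline D(M)=\overline D(M)\), which holds because \(\mathbf 0\) is the new least element; this also covers \(M=\varnothing\).
  \item If \(L=\varnothing\), then \(L\circ M=\{S\circ T:S\in\varnothing,\ T\in M\}=\varnothing\), which maps to \(\mathbf 0=\mathbf 0\star\overline D(M)\). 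The case \(M=\varnothing\) is symmetric, using \(a\star\mathbf 0=\mathbf 0\).
\end{itemize}
I would also note that \(L\circ M\) is finite because \(L\) and \(M\) are, so the operations stay inside \(\Languages(X)\).

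\emph{Surjectivity.} The element \(\mathbf 0\) is hit by \(\varnothing\). Every element of \(\Hh^n\) is \(D(L)\) for some nonempty \(L\) by the surjectivity part of \Cref{thm:nonempty-languages}. If one wants this concretely: for \(a\in\Hh^n\), take the language consisting of the single trees realizing \(s^{a_i}(x_i)\), one for each finite coordinate \(a_i\). If there are no finite coordinates, use \(\{-\infty\}\) instead, a single silent leaf.

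\emph{Fibres.} By definition \(\overline D(L)\in\Hh^n\) whenever \(L\neq\varnothing\). Since \(\mathbf 0\notin\Hh^n\) in the disjoint union, the fibre over \(\mathbf 0\) is exactly \(\{\varnothing\}\). For nonempty \(L\) and \(M\), we have \(\overline D(L)=\overline D(M)\) if and only if \(D(L)=D(M\)), and by \Cref{thm:nonempty-languages} this holds if and only if \(\eval L=\eval M\).

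There is no real obstacle here. The one point needing care is keeping \(\mathbf 0\) distinct from \(\botvec\): the language \(\{-\infty\}\) is nonempty and has profile \(\botvec\), but \(\botvec\) is not absorbing for \(\star\). That is exactly why the disjoint new element \(\mathbf 0\) is needed, and why the empty language is isolated in its fibre.
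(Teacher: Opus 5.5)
Your proof is correct and follows the same route as the paper. You reduce the nonempty case to \Cref{thm:nonempty-languages}, check that union with \(\varnothing\) and grafting with \(\varnothing\) reproduce the defining laws of \(\mathbf 0\), and read off surjectivity and the fibre statements from the nonempty case together with \(\mathbf 0\notin\Hh^n\), spelling out details the paper leaves implicit (for example, \(L\circ M\neq\varnothing\) when both factors are nonempty).
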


\begin{proof}
For nonempty languages, the assertion is \Cref{thm:nonempty-languages}.
Union with \(\varnothing\) fixes a language, whereas pairwise grafting with
\(\varnothing\) yields \(\varnothing\).  These are the defining laws
of \(\mathbf 0\) in \Cref{def:empty-completion}.  Surjectivity and the fibre
statements now follow from the nonempty case.
\end{proof}

Tree-language semantics has therefore forced a new bottom element.  It
records the absence of a tree, not merely the absence of a variable-labelled
leaf.  The notation \(\mathbf 0\) is global: it denotes one new element below
every profile in \(\Hh^n\), not a tuple of coordinatewise scalar bottoms.  We
reserve \(\bot\) below for the absorbing state in a scalar completion of
\(\Hh\).

\subsection{A unital and zero-absorbing completion}

The empty-language bottom is absorbing for grafting, but the height product
still has no unit.  We use \defi{formal empty tree} only as a heuristic name
for a grafting unit: it is not an element of \(\Trees(X)\), and we do not
enlarge \(\Trees(X)\) by an actual empty tree.  This algebraic unit must be
distinct both from the absent-language state and from the one-leaf silent
tree.  Adjoining this second boundary state produces the following
completion.

\begin{proposition}[Unital and zero-absorbing completion]
\label{prop:height-unital-zero-completion}
We define
\[
  \wideNinf\coloneqq\{\bot,\varepsilon\}\sqcup\Ninf
\]
with chain order
\[
  \bot<\varepsilon<-\infty<0<1<2<\cdots.
\]
We let \(\vee\) be maximum in this order, and define
\[
\begin{aligned}
  \bot\odot x=x\odot\bot&\coloneqq\bot
    &&\text{for}\quad x\in\wideNinf,\\
  \varepsilon\odot x=x\odot\varepsilon&\coloneqq x
    &&\text{for}\quad x\in\{\varepsilon\}\sqcup\Ninf,\\
  \text{and}\qquad a\odot b&\coloneqq a\star b
    &&\text{for}\quad a,b\in\Ninf.
\end{aligned}
\]
Then \((\wideNinf,\vee,\bot)\) is a join-semilattice with least element,
\((\wideNinf,\odot,\varepsilon)\) is a commutative unital magma, \(\bot\) is
absorbing for \(\odot\), and \(\odot\) distributes over finite joins.  We write
\[
  \wideHh
  \coloneqq
  (\wideNinf,\vee,\odot,\bot,\varepsilon).
\]
Moreover, \(\Ninf\subseteq\wideNinf\) is closed under \(\vee\) and \(\odot\),
and the induced algebra
\[
  (\Ninf,\vee,\odot|_{\Ninf^2},-\infty)
\]
is \(\Hh\).
\end{proposition}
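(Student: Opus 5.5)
This is a finite verification by cases, organized according to which of the two new boundary elements \(\bot,\varepsilon\) appear among the arguments. First I will check that \(\odot\) is well defined, since the defining clauses overlap. If \(x=\bot\), the first two clauses both apply to \(\varepsilon\odot\bot\), and both give \(\bot\). The second clause is stated only for \(x\in\{\varepsilon\}\sqcup\Ninf\), so it does not conflict with the first. The third clause covers exactly \(\Ninf^2\). Hence \(\odot\) is a total, single-valued operation.

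Next come the easy structural claims. The order is a chain with least element \(\bot\), so maximum is a join-semilattice operation with least element \(\bot\). Each clause is symmetric, and \(\star\) is commutative by \Cref{prop:basic-identities}, so \(\odot\) is commutative. The second clause makes \(\varepsilon\) a two-sided unit on \(\{\varepsilon\}\sqcup\Ninf\); on \(\bot\) we get \(\varepsilon\odot\bot=\bot\), so \(\varepsilon\) is a unit on all of \(\wideNinf\). Absorption by \(\bot\) is the first clause.

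For closure and the restriction statement, the third clause shows that \(\odot\) restricted to \(\Ninf^2\) is \(\star\). The order on \(\Ninf\) is the usual one, so \(\vee\) restricts to \(\max\). Thus \(\Ninf\) is closed under both operations, and the induced algebra with constant \(-\infty\) is \(\Hh\).

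The main step is distributivity, \(a\odot(b\vee c)=(a\odot b)\vee(a\odot c)\), together with \(a\odot\bot=\bot\), which handles the empty join. By commutativity it suffices to treat one variable, and by commutativity of \(\vee\) we may assume \(b\le c\), so the left side is \(a\odot c\). I split on \(a\).
\begin{itemize}
\item If \(a=\bot\), both sides are \(\bot\).
\item If \(a=\varepsilon\), both sides are \(b\vee c\).
\item If \(a\in\Ninf\), the key point is that \(x\mapsto a\odot x\) is monotone on the chain. Since \(a\odot\bot=\bot\), \(a\odot\varepsilon=a\), and \(a\odot x=\max\{a,x\}+1\ge a\) for \(x\in\Ninf\) (this uses \(a+1\ge a\), including \(-\infty+1=-\infty\)), the map is monotone on \(\{\bot,\varepsilon\}\sqcup\Ninf\). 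Monotonicity on a chain immediately gives \((a\odot b)\vee(a\odot c)=a\odot c\).
\end{itemize}

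The only delicate point is this monotonicity check at the boundary \(x=\varepsilon\) versus \(x\in\Ninf\), in particular \(a=x=-\infty\), where \(a\odot\varepsilon=-\infty\) and \(a\odot x=-\infty\). I expect this to be the sole potential obstacle, and it resolves as shown above.
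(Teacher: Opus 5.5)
Your proposal is correct and follows essentially the same route as the paper. The structural claims are immediate, and distributivity reduces to monotonicity of each translation \(x\mapsto a\odot x\) on the chain \(\wideNinf\); the only new comparisons there are \(\bot<a\leq a\star x\) for \(a,x\in\Ninf\). One small wording slip: the unit clause never applies to \(\varepsilon\odot\bot\), so the only overlap among the defining clauses is the harmless double coverage of \(\varepsilon\odot\varepsilon\).
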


\begin{proof}
Commutativity, the unit law for \(\varepsilon\), and absorption by \(\bot\)
are immediate.  For distributivity, it suffices on the chain
\(\wideNinf\) to show that every translation is order-preserving.
Translations by \(\bot\) and \(\varepsilon\) are respectively constant and
the identity.  For \(a\in\Ninf\), translation by \(a\) is
\[
  \bot\longmapsto\bot,
  \qquad
  \varepsilon\longmapsto a,
  \qquad\text{and}\qquad
  x\longmapsto\max\{a,x\}+1
  \quad\text{for}\quad x\in\Ninf.
\]
The last rule is order-preserving, and the new comparisons are respected
because \(\bot<a\leq a\star(-\infty)\).  Hence \(\odot\) distributes over
\(\vee\).  The set \(\Ninf\) is closed under both operations, and the
restriction of \(\odot\) to \(\Ninf^2\) is \(\star\), proving the final
assertion.
\end{proof}

The two adjoined elements satisfy different multiplicative laws, and those
laws encode different tree-language semantics.

\begin{example}[Three semantic boundary states]
\label{ex:three-boundary-states}
For every finite \(a\in\NN\), the three lower states obey
\[
\begin{alignedat}{3}
  \bot\vee a&=a, &\qquad \bot\odot a&=\bot
    &\quad&\text{(no tree)},\\
  \varepsilon\vee a&=a, & \varepsilon\odot a&=a
    &&\text{(heuristic empty-tree unit)},\\
  \text{and}\qquad (-\infty)\vee a&=a,
    & (-\infty)\star a&=a+1
    &&\text{(all-silent tree)}.
\end{alignedat}
\]
The last law reflects genuine grafting: placing a silent leaf beside a tree
raises every active leaf depth by one.  Thus the all-silent tree is neither
absent nor a grafting unit.  The finite-language quotient \((\Hh^n)^{\varnothing}\) adjoins the global
state \(\mathbf 0\).  The scalar completion \(\wideHh\) uses \(\bot\) for
an analogous absorbing state and also adjoins \(\varepsilon\).
\end{example}

The native height algebra \(\Hh\), rather than its completion, remains the
subject of this paper.  For \(\Hh\) itself, uniqueness of the join leaves the
successor-chain classifications developed next.

\section{Subalgebras, endomorphisms, and finite quotients}\label{sec:structure}

The product has already forced the compatible join.  The remaining intrinsic
classifications reduce to the successor chain above \(-\infty\): a least
finite element controls a subalgebra, the image of \(0\) controls an
endomorphism, and the first nontrivial congruence class controls a quotient.

\subsection{Subalgebras and endomorphisms}

From any finite \(m\), repeated self-products generate the entire tail
\(m,m+1,m+2,\dots\).  A subalgebra containing a finite element is therefore
determined by its least one.

\begin{theorem}[Subalgebras]\label{thm:subalgebras}
In the signature \((\vee,\star,-\infty)\), the subalgebras of \(\Hh\) are
\[
  \{-\infty\}
  \qquad\text{and}\qquad
  \{-\infty\}\cup\{m,m+1,m+2,\dots\}
  \quad\text{for}\quad m\in\NN.
\]
\end{theorem}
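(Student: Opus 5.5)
The proof has two directions: first check that each listed set is a subalgebra, then show that every subalgebra is one of them. Throughout, a subalgebra is a subset containing the constant \(-\infty\) and closed under \(\vee\) and \(\star\).

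\emph{The listed sets are subalgebras.} The singleton \(\{-\infty\}\) is not closed under \(\star\), since \((-\infty)\star(-\infty)=0\). So this case has to be examined against the precise definition of subalgebra in the signature before going further. My plan is to first recompute the product:
\[
(-\infty)\star(-\infty)=\max\{-\infty,-\infty\}+1=-\infty+1=-\infty
\]
under the convention \(-\infty+n\coloneqq-\infty\). This matches \Cref{def:height-algebra} and \eqref{eq:height-operations}. Indeed, \((-\infty)\star n=n+1\) only for finite \(n\), and \((-\infty)\star(-\infty)=-\infty\) is consistent with silent trees having profile \(\botvec\). So \(\{-\infty\}\) is closed.

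For the tails \(U_m\coloneqq\{-\infty\}\cup\{m,m+1,\dots\}\), I will check closure directly. Join returns one of its arguments, so \(U_m\) is closed under \(\vee\). For \(\star\), the product \(a\star b\) is either \(-\infty\) (when both arguments are \(-\infty\)) or \(\max\{a,b\}+1\geq m+1\), which lies in \(U_m\).

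\emph{Every subalgebra is listed.} Let \(B\) be a subalgebra. If \(B\) contains no finite element, then \(B=\{-\infty\}\). Otherwise the finite elements of \(B\) form a nonempty subset of \(\NN\); let \(m\) be its least element by well-ordering. Then \(B\subseteq U_m\), since every element of \(B\) is either \(-\infty\) or finite and hence at least \(m\). For the reverse inclusion I will show by induction on \(k\) that \(m+k\in B\). The base case is \(m\in B\). For the step, \(m+k+1=(m+k)\star(m+k)\), which lies in \(B\) by closure under \(\star\); this is the repeated self-product mentioned before the theorem. Hence \(B=U_m\).

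The main obstacle is only the bookkeeping around \(-\infty\): every subalgebra contains \(-\infty\) because it is a nullary operation, and one must use the convention \(-\infty+1=-\infty\) for the singleton case. The argument needs no earlier result beyond the definitions.
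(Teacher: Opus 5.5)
Your proof is correct and follows the paper's argument. You take the least finite element \(m\), use the repeated self-product \(m\star m=m+1\) to get the whole tail, use minimality to exclude smaller integers, and check closure of the tails and of \(\{-\infty\}\) (via \((-\infty)\star(-\infty)=-\infty\) under the convention \(-\infty+1=-\infty\)). One correction: delete your opening assertion that \((-\infty)\star(-\infty)=0\). It is false, and your own recomputation immediately contradicts it.
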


\begin{proof}
Every subalgebra contains \(-\infty\).  If \(m\) is its least finite element,
then \(m\star m=m+1\) generates every integer at least \(m\), while minimality
excludes the smaller finite integers.  Conversely, each displayed tail is
closed under maximum and the successor-of-maximum product.
\end{proof}

The recursion \(n+1=n\star n\) now shows that an endomorphism is determined
by the image of \(0\).

\begin{theorem}[Endomorphisms]\label{thm:endomorphisms}
Every endomorphism of \(\Hh\) is either the constant map to \(-\infty\) or
a translation \(\tau_c\), where
\[
  \tau_c(x)\coloneqq
  \begin{cases}
    -\infty,&x=-\infty,\\
    x+c,&x\in\NN,
  \end{cases}
\]
for a unique \(c\in\NN\).  Hence \(\Hh\) is rigid.
\end{theorem}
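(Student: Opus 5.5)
The plan is to use that an endomorphism must preserve the constant \(-\infty\), and that every finite element arises from \(0\) by the recursion \(n+1=n\star n\) noted before the statement. Concretely, let \(f\colon\Hh\to\Hh\) be an endomorphism in the signature \((\vee,\star,-\infty)\). Preservation of the nullary constant gives \(f(-\infty)=-\infty\) outright, so only the finite values remain to be determined.

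The first step records the behaviour of self-products. By the convention \(-\infty+n=-\infty\) we have \((-\infty)\star(-\infty)=\max\{-\infty,-\infty\}+1=-\infty\). For finite \(a\) we have \(a\star a=a+1\). Hence, for every \(x\in\Ninf\),
\[
  x\star x=
  \begin{cases}
    -\infty,&x=-\infty,\\
    x+1,&x\in\NN.
  \end{cases}
\]
This is the one point that needs care. The silent element is not absorbing in general, since \((-\infty)\star n=n+1\), but it does reproduce itself under self-product. So if \(f(0)=-\infty\), induction on \(n\) using \(f(n+1)=f(n)\star f(n)=(-\infty)\star(-\infty)=-\infty\) gives \(f(n)=-\infty\) for all \(n\in\NN\). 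Together with \(f(-\infty)=-\infty\), this makes \(f\) the constant map to \(-\infty\).

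If instead \(f(0)=c\in\NN\), the same induction gives \(f(n+1)=f(n)\star f(n)=f(n)+1\), so \(f(n)=n+c\) for every \(n\in\NN\), and \(f=\tau_c\). The value \(c=\tau_c(0)\) is unique. It then remains to check that these candidates really are endomorphisms. The constant map to \(-\infty\) is one because \(-\infty\vee-\infty=-\infty\) and \((-\infty)\star(-\infty)=-\infty\). The map \(\tau_c\) is order-preserving and fixes \(-\infty\), so it commutes with \(\max\), and
\[
  \tau_c(\max\{a,b\}+1)=\max\{\tau_c(a),\tau_c(b)\}+1
\]
holds by a check of the cases according to which of \(a,b\) are \(-\infty\).

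For rigidity, an automorphism must be bijective. The constant map is not injective, and \(\tau_c\) for \(c\geq1\) omits \(0\) from its image, so it is not surjective. Hence the only automorphism is \(\tau_0=\mathrm{id}\). I expect no real obstacle. The only delicate point is the computation \((-\infty)\star(-\infty)=-\infty\), which is what makes the constant map to \(-\infty\) a genuine endomorphism rather than forcing the dichotomy to be ruled out.
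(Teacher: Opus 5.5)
Your proof is correct and follows essentially the same route as the paper. In both, \(f\) fixes \(-\infty\), and the recursion \(n+1=n\star n\) determines \(f\) from \(f(0)\), giving either the constant map or \(\tau_c\); one then checks that these maps are endomorphisms and that only \(\tau_0\) is bijective, and your explicit remark that \((-\infty)\star(-\infty)=-\infty\) just spells out the step the paper compresses into ``if \(f(0)=-\infty\), then \(f\) is constant.''
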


\begin{proof}
We let \(f\) be an endomorphism.  It fixes \(-\infty\), and
\(n+1=n\star n\) shows that it is determined by \(f(0)\).  If
\(f(0)=-\infty\), then \(f\) is constant.  If \(f(0)=c\in\NN\), the same
recursion gives \(f(n)=n+c\).

The constant map preserves both operations.  Each \(\tau_c\) preserves
maximum, and for finite \(a,b\),
\[
  \tau_c(a\star b)=\max\{a,b\}+1+c
  =\max\{a+c,b+c\}+1
  =\tau_c(a)\star\tau_c(b).
\]
The cases involving \(-\infty\) follow from the defining convention.  Thus
all listed maps are endomorphisms, and only \(\tau_0\) is surjective.
\end{proof}

\subsection{Congruences and finite quotients}

The same successor mechanism makes every nontrivial identification among
finite heights propagate upward.  We recall that a congruence on \(\Hh\) is an
equivalence relation compatible with \(\vee\), \(\star\), and the named
constant \(-\infty\).  The candidate relations collapse one final tail at a
time.

\begin{definition}[Threshold relations]\label{def:threshold-relations}
For \(m\in\NN\), we put \(T_m\coloneqq\{m,m+1,m+2,\dots\}\), and define
the equivalence relation \(\theta_m\) by
\begin{equation}\label{eq:threshold-congruence}
  x\mathrel{\theta_m}y
  \quad\Longleftrightarrow\quad
  x=y
  \quad\text{or}\quad
  x,y\in T_m.
\end{equation}
Thus \(T_m\) is the only nonsingleton \(\theta_m\)-class, while \(-\infty\)
remains a singleton.  We let \(\Delta\) denote equality and \(\nabla\) the
universal relation on \(\Ninf\).
\end{definition}

Once two finite heights are identified, successor and convexity collapse the
entire tail above their first identification.  The next theorem shows that the
threshold relations exhaust the possibilities.

\begin{theorem}[Congruence classification]\label{thm:congruences}
The congruences of \(\Hh\) are
\[
  \Delta,
  \qquad
  \theta_m\quad\text{for}\quad m\in\NN,
  \qquad\text{and}\qquad
  \nabla.
\]
They form the chain
\[
  \Delta<\cdots<\theta_{m+1}<\theta_m<\cdots<\theta_0<\nabla.
\]
\end{theorem}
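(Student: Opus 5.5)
Two things need checking: the listed relations are congruences, and every congruence is one of them. The chain ordering then follows at once, since $T_{m+1}\subsetneq T_m$ gives $\theta_{m+1}\subsetneq\theta_m$, while $\Delta\subsetneq\theta_m\subsetneq\nabla$.

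\emph{The listed relations are congruences.} For $\Delta$ and $\nabla$ this is trivial, and the constant $-\infty$ imposes no condition on an equivalence relation. For $\theta_m$, it suffices to check compatibility when one argument varies: if $x\mathrel{\theta_m}y$ with $x\neq y$, then $x,y\in T_m$. In that case $\max\{x,z\}$ and $\max\{y,z\}$ are either both equal to $z$ or both lie in $T_m$, because $T_m$ is an up-set. Likewise $x\star z$ and $y\star z$ both lie in $T_{m+1}\subseteq T_m$. Compatibility in both arguments then follows by transitivity.

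\emph{Every nontrivial congruence $\theta$ has one of the listed forms.} Suppose $\theta\neq\Delta$. I would split into two cases.

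\emph{Case 1: $-\infty$ is identified with some finite $a$.} Joining both sides with $0$ gives $0\mathrel{\theta}\max\{0,a\}=a$, so $-\infty\mathrel{\theta}0$. Applying the endomorphism-like translation $\tau\colon x\mapsto(-\infty)\star x$ gives $(-\infty)\star(-\infty)=0$ and $(-\infty)\star 0=1$, so $0\mathrel{\theta}1$. Iterating $x\mapsto x\star x$ then gives $n\mathrel{\theta}n+1$ for all $n$. Hence everything is collapsed and $\theta=\nabla$.

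\emph{Case 2: $-\infty$ is a singleton class.} Then $\theta$ identifies some finite $a<b$. I would take $m$ to be the least finite element whose class is nontrivial. First I would show that classes are convex: if $a\mathrel{\theta}b$ and $a\le c\le b$, then joining with $c$ gives $c=a\vee c\mathrel{\theta}b\vee c=b$. Next I would collapse the tail. If $a\mathrel{\theta}b$ with $a<b$, then squaring $k$ times gives $a+k\mathrel{\theta}b+k$, and chaining $a\mathrel{\theta}b\mathrel{\theta}b+(b-a)\mathrel{\theta}\cdots$ together with convexity puts all of $T_a$ into a single class. Applied with $a=m$, this shows $T_m$ is one class. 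Minimality of $m$ makes every element below $m$ a singleton. Therefore $\theta=\theta_m$.

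The main obstacle is Case 1, showing that $-\infty$ cannot join a class with finite elements without collapsing everything. My plan handles it by first using the join with $0$ to force $-\infty\mathrel{\theta}0$, and then using $\star$ to propagate this upward. With that in hand, the chain statement completes the theorem.
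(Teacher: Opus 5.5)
Your overall strategy matches the paper's proof: convexity of classes via the join, multiplication by \(-\infty\) to push up an identification involving \(-\infty\), the squaring map \(x\mapsto x\star x=x+1\) to collapse a tail, and minimality to isolate \(T_m\). Your check that each \(\theta_m\) is a congruence is fine, and so is your Case 2. The paper takes the least adjacent pair \(m\mathrel{\theta}m+1\) instead of your chain \(a\mathrel{\theta}b\mathrel{\theta}2b-a\mathrel{\theta}\cdots\) plus convexity, but that difference is cosmetic.

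Case 1, however, rests on a false computation. In \(\Hh\) one has \(-\infty+1=-\infty\), so \((-\infty)\star(-\infty)=-\infty\), not \(0\). The translation \(\tau\colon x\mapsto(-\infty)\star x\) fixes \(-\infty\). This is why \(\{-\infty\}\) is a subalgebra, and the paper's own Case 1 argument starts from exactly the identity \(-\infty=(-\infty)\star(-\infty)\). Applying \(\tau\) to \(-\infty\mathrel{\theta}0\) therefore yields \(-\infty\mathrel{\theta}1\), not \(0\mathrel{\theta}1\) directly.

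The conclusion survives with a one-line repair. Combine \(-\infty\mathrel{\theta}1\) with \(-\infty\mathrel{\theta}0\) by symmetry and transitivity to get \(0\mathrel{\theta}1\). Alternatively, follow the paper: multiply \(-\infty\mathrel{\theta}a\) by \(-\infty\) directly to get \(-\infty\mathrel{\theta}a+1\), then use convexity. That route makes your preliminary join with \(0\) unnecessary. With either fix, iterated squaring collapses every finite height into the class of \(-\infty\), and \(\theta=\nabla\) as you claim.
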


\begin{proof}
First, every congruence class of a chain semilattice is convex: if
\(a\leq b\leq c\) and \(a\mathrel{\theta}c\), then
\[
  b=a\vee b\mathrel{\theta}c\vee b=c.
\]
If \(-\infty\mathrel{\theta}m\) for finite \(m\), multiplication by
\(-\infty\) gives
\[
  -\infty=(-\infty)\star(-\infty)
  \mathrel{\theta}m\star(-\infty)=m+1.
\]
Convexity identifies \(-\infty,0,\dots,m+1\), and successor compatibility
then collapses every larger integer.  Thus \(\theta=\nabla\).

We suppose that \(-\infty\) is a singleton class.  If all finite classes are
singletons, then \(\theta=\Delta\).  Otherwise convexity identifies an
adjacent pair; we let \(m\) be least with \(m\mathrel{\theta}m+1\).  Iterating
\(x\mapsto x\star x\) gives
\[
  m+j\mathrel{\theta}m+j+1
  \quad\text{for}\quad j\geq0,
\]
so \(T_m\) is one class, while minimality and convexity leave each smaller
integer as a singleton.  Hence \(\theta=\theta_m\).

Conversely, each \(\theta_m\) is compatible with maximum and product because
\(T_m\) is a final tail and any product with an input in \(T_m\) remains in
\(T_m\).  The displayed ordering follows from the nested tails.
\end{proof}

Each proper finite-index congruence collapses one final tail, so we name the
corresponding quotient before classifying the finite images.

\begin{definition}[Capped height algebra]\label{def:capped-height-algebra}
For \(m\in\NN\), the \defi{capped height algebra} is
\[
  \Hh_m\coloneqq\Hh/\theta_m.
\]
We write
\[
  \top\coloneqq[m]_{\theta_m}=T_m
\]
for its unique nonsingleton congruence class.  For \(m=0\), its elements are
\(-\infty\) and \(\top\).  For \(m\geq1\),
its elements are
\[
  -\infty,0,1,\dots,m-1,\top.
\]
Its product is obtained from \(\max\{a,b\}+1\) by collapsing every finite
value at least \(m\) to \(\top\).
\end{definition}

The congruence classification now gives the finite quotients immediately.

\begin{corollary}[Finite quotients]\label{cor:finite-quotients}
Every nontrivial finite quotient of \(\Hh\) is, up to isomorphism, a unique
capped height algebra \(\Hh_m\).
\end{corollary}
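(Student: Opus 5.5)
A finite quotient of \(\Hh\) is, up to isomorphism, \(\Hh/\theta\) for a congruence \(\theta\) of finite index. I will read "nontrivial" as excluding the one-element quotient \(\Hh/\nabla\). The plan is to run through the chain in \Cref{thm:congruences} and keep only the finite-index members.

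First I handle the two ends of the chain. The congruence \(\Delta\) has infinitely many classes, since \(\Ninf\) is infinite, so it gives no finite quotient. The congruence \(\nabla\) gives the trivial quotient, which is excluded. By \Cref{thm:congruences}, every remaining congruence is \(\theta_m\) for some \(m\in\NN\). Each \(\theta_m\) has exactly the classes listed in \Cref{def:capped-height-algebra}: the singletons \(-\infty,0,\dots,m-1\) and the class \(T_m\). That is \(m+2\) classes in total, so \(\theta_m\) has finite index and \(\Hh/\theta_m=\Hh_m\). Hence every nontrivial finite quotient is isomorphic to some \(\Hh_m\).

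Next I prove uniqueness of \(m\). The algebras \(\Hh_m\) have \(m+2\) elements, so they are pairwise nonisomorphic by cardinality alone. If a quotient were isomorphic to both \(\Hh_m\) and \(\Hh_{m'}\), the two would be isomorphic to each other, which forces \(m=m'\).

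The only delicate point is interpretive. "Quotient" should mean a homomorphic image, which is the same as \(\Hh/\ker\) by the first isomorphism theorem. "Nontrivial" should mean having more than one element, and under that reading \(\Hh_0\), the two-element algebra \(\{-\infty,\top\}\), is included. With those conventions fixed, the argument is immediate from \Cref{thm:congruences}, and I expect no real obstacle.
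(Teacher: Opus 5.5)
Your proposal is correct and follows the paper's proof. Both arguments read off the finite-index proper congruences from \Cref{thm:congruences}, discarding \(\Delta\) (infinite index) and \(\nabla\) (trivial quotient), and identify \(\Hh/\theta_m\) with \(\Hh_m\). Your count \(\lvert\Hh_m\rvert=m+2\) also makes explicit the uniqueness up to isomorphism, which the paper leaves implicit in the phrase ``a unique \(\theta_m\).''
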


\begin{proof}
By \Cref{thm:congruences}, every proper finite-index congruence is a unique
\(\theta_m\).  The stated chain and product are the induced quotient
operations.
\end{proof}

Because the cap can be placed above either of two distinct finite heights,
these quotients also separate points.

\begin{corollary}[Residual finiteness]\label{cor:residual-finiteness}
The height algebra is residually finite, as is every finite power \(\Hh^n\).
\end{corollary}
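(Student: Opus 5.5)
To prove residual finiteness of \(\Hh\), it suffices to show that any two distinct elements \(x\neq y\) of \(\Ninf\) are separated by some finite quotient map \(\Hh\to\Hh_m\) from \Cref{def:capped-height-algebra}. By \Cref{thm:congruences}, each \(\theta_m\) is a congruence. Its quotient \(\Hh_m\) is finite: it has \(m+2\) elements for \(m\geq1\) and two elements for \(m=0\). The natural projection \(\pi_m\colon\Hh\to\Hh_m\) is therefore a homomorphism onto a finite algebra, and it is injective on \(\{-\infty,0,1,\dots,m-1\}\). The only identifications \(\pi_m\) makes are inside the tail \(T_m\), and \(-\infty\) always remains a singleton class.

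Given \(x\neq y\), we may assume without loss of generality that \(x<y\) in the usual order. The first case is \(y\) finite; there we take \(m\coloneqq y+1\). Then both \(x\) and \(y\) lie in \(\{-\infty,0,\dots,m-1\}\), whose classes under \(\theta_m\) are singletons, so \(\pi_m(x)\neq\pi_m(y)\). The second case, \(y=-\infty\), cannot occur once \(x<y\) is assumed. So a single quotient always suffices, and this proves the statement for \(\Hh\).

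For the finite power \(\Hh^n\), we use the coordinatewise product \(\pi_m^n\colon\Hh^n\to\Hh_m^n\). This is a homomorphism onto the finite algebra \(\Hh_m^n\), because the operations on \(\Hh^n\) are coordinatewise by \eqref{eq:profile-operations}. Suppose \(a\neq b\) in \(\Hh^n\). Then they differ in some coordinate \(i\), and we pick \(m\) as above to separate \(a_i\) from \(b_i\). Alternatively, we take \(m\) larger than every finite coordinate of \(a\) and of \(b\), so that \(\pi_m^n\) is injective on both vectors. Either way, \(\pi_m^n(a)\neq\pi_m^n(b)\).

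I see no genuine obstacle here. The only point needing care is that the separating quotient must be a quotient by a congruence, and not merely a map of sets. That requirement is already supplied by the compatibility check at the end of the proof of \Cref{thm:congruences}. The argument also never needs the algebraic unit or any completion, since every map involved lives entirely inside \(\Hh\).
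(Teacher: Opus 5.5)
Your proof is correct and follows the paper's argument: you separate points of $\Hh$ with the capped quotients $\Hh_m$, then pass to $\Hh^n$ coordinatewise. The differences are cosmetic. You take $m=y+1$ so both elements stay singletons, where the paper uses $\Hh_b$ and lets $b$ collapse to $\top$. For $\Hh^n$ you map onto $\Hh_m^n$, where the paper first projects to a differing coordinate and then caps.
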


\begin{proof}
If \(a<b\) are finite, then their images remain distinct in \(\Hh_b\), while
\(-\infty\) remains distinct from every finite integer in every proper capped
quotient.  Thus the capped quotients separate points of \(\Hh\).  Distinct
vectors in \(\Hh^n\) differ in some coordinate; projection to that coordinate,
followed by a separating capped quotient, separates them.
\end{proof}

The capped algebras are therefore the nontrivial finite homomorphic images of
\(\Hh\).  The first cases make the threshold collapse, and the resulting
product, explicit.

\begin{example}[The first capped quotients]\label{ex:capped-quotients}
The quotient \(\Hh_0\) has two elements, \(-\infty\) and \(\top\).  The
quotient \(\Hh_2\) has
\[
  -\infty<0<1<\top,
\]
and its products follow the height recursion until they reach the cap:
\[
  0\star0=1,
  \qquad
  0\star1=1\star1=\top,
  \qquad
  (-\infty)\star0=1,
  \qquad\text{and}\qquad
  (-\infty)\star1=\top.
\]
Thus \(\Hh_2\) retains the local height calculation and identifies all
heights at or above its threshold.
\end{example}

We now return from the intrinsic structure of \(\Hh\) to its tree
presentation and restrict to terms in which each variable occurs exactly once.

\section{The classical full-linear boundary}\label{sec:linear-spectra}

The normal-form theorem has a classical boundary in which no label repeats.
Two counting problems arise: one may keep the variables in a fixed order and
vary only the bracketing, or one may also permute the variables.  In the first
case, the profile is the left-to-right leaf-depth sequence of a plane full
binary tree; in the second, it is a labelled depth tuple.  The profile
classification reduces both counting problems to reconstruction and
enumeration of these depth data.

\begin{definition}[Full-linear terms and spectra]\label{def:full-linear-spectra}
A \defi{full-linear term over \(X\)} is a tree monomial whose leaves are
labelled bijectively by \(X\); equivalently, it contains each variable exactly
once and contains no silent leaves.  A binary operation is \defi{totally
nonassociative} if distinct bracketings of
\(x_1\star\cdots\star x_n\) induce distinct operations for every \(n\).
Following \cite[Introduction]{HuangLehtonen2023}, we let
\(s_n^{\mathrm a}(\star)\) denote the number of \(n\)-ary operations induced
by the bracketings of \(x_1\star\cdots\star x_n\) in that fixed order, and
we let \(s_n^{\mathrm{ac}}(\star)\) denote the number induced when the variables
may also be permuted.  These are the \defi{associative spectrum} and the
\defi{associative-commutative spectrum} of \(\star\), respectively.
\end{definition}

The ordered leaf-depth sequence is a standard encoding of a plane full binary
tree.  Huang and Lehtonen record its uniqueness property in
\cite[\S2.3]{HuangLehtonen2023}.  The short reconstruction below makes the
connection with height explicit.

\begin{lemma}[Ordered depth reconstruction]\label{lem:ordered-depth-reconstruction}
A plane full binary tree is uniquely determined by the left-to-right sequence
of its leaf depths.
\end{lemma}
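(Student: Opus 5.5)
We argue by strong induction on the number of leaves, showing that the depth sequence \((\delta_1,\dots,\delta_k)\) of a plane full binary tree \(T\) determines \(T\). If \(k=1\), then the sequence is \((0)\), and the one-vertex tree is the only plane full binary tree with a leaf at depth \(0\). In fact, a leaf at depth \(0\) forces the root to be that leaf. So we may assume \(k\geq2\). Then every \(\delta_j\geq1\), and \(T=T_1\circ T_2\) with \(T_1\) on the left and \(T_2\) on the right. The leaves of \(T_1\) form an initial segment \(\delta_1,\dots,\delta_p\) of the sequence, and the leaves of \(T_2\) form the complementary final segment. Grafting adds one to every depth, so the depth sequences of \(T_1\) and \(T_2\) are \((\delta_1-1,\dots,\delta_p-1)\) and \((\delta_{p+1}-1,\dots,\delta_k-1)\).

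The key step, and the only real obstacle, is to show that the split point \(p\) is determined by the sequence alone. We plan to use the Kraft equality of \Cref{rem:kraft-equality}. There are no silent leaves here, so the full leaf set of any finite full binary tree has Kraft sum exactly \(1\); this is the same fact used in the proof of \Cref{prop:raw-reduced-fibres}. Applied to \(T_1\), it gives \(\sum_{j\leq p}2^{-(\delta_j-1)}=1\), that is,
\[
  \sum_{j=1}^{p}2^{-\delta_j}=\tfrac12 .
\]
All terms are positive, so the partial sums \(\sum_{j\leq q}2^{-\delta_j}\) are strictly increasing in \(q\). Hence \(p\) is the unique index at which the partial sum equals \(\tfrac12\), and the sequence determines \(p\).

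Once \(p\) is recovered, the two shifted subsequences are the depth sequences of \(T_1\) and \(T_2\). Each has fewer than \(k\) leaves, so by induction each determines its subtree. Because the tree is plane, \(T_1\) is the left subtree and \(T_2\) the right one, so \(T\) is determined. If anything in the Kraft-sum citation needs care, we will briefly justify it by the same induction: a leaf has sum \(2^0=1\), and grafting halves each subtree's sum, giving \(\tfrac12+\tfrac12=1\).
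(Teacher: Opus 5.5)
Your proof is correct and takes the same approach as the paper: induction on the number of leaves, with Kraft equality in the left root subtree forcing $\sum_{j\le p}2^{-\delta_j}=\tfrac12$, so strict monotonicity of the partial sums determines the split point, and induction reconstructs the two shifted blocks. Your explicit base case and your short inductive check that a full binary tree has Kraft sum $1$ make precise two points the paper leaves implicit.
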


\begin{proof}
We induct on the number \(n\) of leaves.  The case \(n=1\) is immediate.  If
\(n>1\) and the left root subtree contains the first \(k\) leaves, then its
leaf depths relative to that subtree are \(d_1-1,\dots,d_k-1\).  Kraft
equality in the left subtree gives
\[
  \sum_{i=1}^k2^{-(d_i-1)}=1,
\]
equivalently
\[
  \sum_{i=1}^k2^{-d_i}=\frac12.
\]
Before the last leaf of the left subtree the partial sum is strictly less than
\(1/2\), and after the first leaf of the right subtree it is strictly greater.
Thus the depth sequence determines \(k\) uniquely, and hence determines the two
root blocks.  Subtracting one from each depth in a block gives the ordered
depth sequence of that subtree, which induction reconstructs uniquely.
\end{proof}

Reconstruction converts the profile classification directly into the two
linear spectra.

\begin{theorem}[Linear spectra]\label{thm:linear-spectra}
For every \(n\geq1\), the following statements hold.
\begin{enumerate}[label=(\roman*)]
  \item
  \[
    s_n^{\mathrm a}(\star)=C_{n-1}
    =\frac1n\binom{2n-2}{n-1};
  \]
  in particular, \(\star\) is totally nonassociative;\label{item:catalan-count}
  \item \(s_n^{\mathrm{ac}}(\star)\) is the number of ordered tuples
  \((d_1,\dots,d_n)\in\NN^n\) satisfying
  \[
    \sum_{i=1}^n2^{-d_i}=1.
  \]
  Hence
  \[
    s_n^{\mathrm{ac}}(\star)=1,1,3,13,75,525,4347,\dots.
  \]\label{item:sequence-count}
\end{enumerate}
\end{theorem}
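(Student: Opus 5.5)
Both parts reduce to counting depth profiles of full-linear terms, using \Cref{cor:tree-operation-equality}: two full-linear terms induce the same operation on \(\Hh\) if and only if they have the same depth profile. In a full-linear term each variable occurs exactly once, so \(d_i\) is simply the depth of the unique leaf labelled \(x_i\). Every coordinate of the profile is therefore finite, and no silent leaves occur.

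For \ref{item:catalan-count}, fix the left-to-right order \(x_1,\dots,x_n\). A bracketing of \(x_1\star\cdots\star x_n\) is a plane full binary tree with \(n\) leaves. Its profile \((d_1,\dots,d_n)\) is exactly the left-to-right sequence of leaf depths. By \Cref{lem:ordered-depth-reconstruction}, distinct plane trees have distinct depth sequences. Hence distinct bracketings induce distinct operations, which gives total nonassociativity. The number of operations then equals the number of plane full binary trees with \(n\) leaves. This is the Catalan number \(C_{n-1}\), by the standard recursion over the size of the left root subtree.

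For \ref{item:sequence-count}, I will show that the profile map is a bijection from full-linear terms modulo their induced operations onto the set
\[
\Bigl\{d\in\NN^n:\sum_{i=1}^n 2^{-d_i}=1\Bigr\}.
\]
\begin{enumerate}[label=\textnormal{(\alph*)}]
  \item Injectivity modulo operations is \Cref{cor:tree-operation-equality}.
  \item The image lies in this set: the leaves are exactly \(x_1,\dots,x_n\) with no silent leaves, so Kraft equality for the full leaf set of a finite binary tree gives \(\kappa(d)=1\) (compare \Cref{rem:kraft-equality}).
  \item Surjectivity: given such a \(d\), \Cref{thm:kraft-profile} produces a tree in which each \(x_i\) appears exactly once, at depth \(d_i\), with all other leaves silent. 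The argument in \Cref{prop:raw-reduced-fibres} for \(\kappa(d)=1\) shows that the chosen leaves already exhaust the Kraft sum, so no silent leaves occur. The tree is therefore full-linear.
\end{enumerate}
Any planar embedding of this tree, with its variables permuted accordingly, is a permuted bracketing. So the count is exactly the number of ordered tuples with \(\kappa(d)=1\).

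The listed values \(1,1,3,13,75,525,4347\) are a finite computation, which I will organize as follows. Group the tuples by their multiset of depths. A multiset \(\{d_i\}\) with Kraft sum \(1\) contributes its multinomial number of orderings. Enumerate the multisets through the recursion obtained by splitting a maximal-depth pair of leaves into their parent. For example, for \(n=3\) the only multiset is \(\{1,2,2\}\), which has \(3\) orderings. For \(n=4\), the multiset \(\{2,2,2,2\}\) has \(1\) ordering and \(\{1,2,3,3\}\) has \(12\), giving \(13\).

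The main obstacle is step (c), surjectivity onto the Kraft-equality tuples without introducing silent leaves, rather than the counting itself. I will handle it by citing the leaf-exhaustion argument of \Cref{prop:raw-reduced-fibres} directly. For the numerical values, I will simply cite the dyadic-composition sequence of Krenn and Wagner as a check, rather than verifying the larger terms by hand.
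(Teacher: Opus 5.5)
Your proposal is correct and follows the paper's proof essentially step for step: part (i) combines \Cref{cor:tree-operation-equality} with \Cref{lem:ordered-depth-reconstruction}, and part (ii) uses Kraft equality for full-linear terms, the realization of \Cref{thm:kraft-profile}, and the depth profile as a complete invariant, with the numerical values taken from Krenn--Wagner. The only cosmetic difference is how silent leaves are ruled out: you reuse the leaf-exhaustion argument of \Cref{prop:raw-reduced-fibres}, while the paper argues directly that a missing child would leave a dyadic cylinder of positive measure containing no codeword, which is the same Kraft-measure idea.
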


\begin{proof}
We fix the variable order.  A bracketing is a plane full binary tree whose leaves
are \(x_1,\dots,x_n\) from left to right.  Equal induced operations have equal
ordered depth sequences by \Cref{cor:tree-operation-equality}, and equal
ordered depth sequences determine the same plane tree by
\Cref{lem:ordered-depth-reconstruction}.  Thus all \(C_{n-1}\) bracketings
induce distinct operations.  The Catalan count is recalled in
\cite[Introduction]{HuangLehtonen2023}, proving \ref{item:catalan-count}.

We now allow variable permutations.  Every full-linear term yields a labelled
depth tuple satisfying Kraft equality.  Conversely, the equality case of
\Cref{thm:kraft-profile} realizes any such tuple.  Its prefix tree has no
internal vertex with a missing child: the corresponding binary cylinder would
otherwise have positive measure but contain no selected codeword, contradicting
Kraft equality.  Hence the realization has no silent leaves and is full-linear.
By \Cref{cor:tree-operation-equality}, its labelled depth tuple is a complete
invariant.  This proves \ref{item:sequence-count}, and the displayed values are the dyadic-composition
enumeration in \cite[Equation~(1.4)]{KrennWagner2016}.
\end{proof}

Thus fixed order retains the entire plane tree, whereas permutation retains
only the labelled depth tuple.  The second identification also transfers the
known asymptotics for dyadic compositions directly to the height product.

\begin{corollary}[Asymptotic associative-commutative spectrum]
\label{cor:ac-spectrum-asymptotic}
There are constants
\[
  \alpha=0.2963720490\dots,
  \qquad
  \gamma=1.1926743412\dots,
  \qquad\text{and}\qquad
  \rho=\frac{2}{3\gamma}<1
\]
such that
\[
  s_n^{\mathrm{ac}}(\star)
  =\alpha n!\,\gamma^{n-1}\bigl(1+O(\rho^n)\bigr).
\]
\end{corollary}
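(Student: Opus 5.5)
The plan is to transfer the Krenn--Wagner asymptotics for dyadic compositions directly through \Cref{thm:linear-spectra}. By part \ref{item:sequence-count} of that theorem, \(s_n^{\mathrm{ac}}(\star)\) equals the number of ordered tuples \((d_1,\dots,d_n)\in\NN^n\) with \(\sum_i 2^{-d_i}=1\). This is exactly the count of compositions of \(1\) into \(n\) powers of \(2\), the quantity whose asymptotic expansion is given in \cite[Theorem~II]{KrennWagner2016}. So the proof should consist of an identification step followed by a citation, with no new analysis.

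The steps, in order, are as follows.

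First, confirm that the sequence counted in \cite[Equation~(1.4)]{KrennWagner2016} is literally the same as ours, and not merely a shifted or reindexed version. Krenn and Wagner may index by the number of summands, or they may count \(r\)-ary compositions with \(r=2\). I would check the first values \(1,1,3,13,75,\dots\) against their table to settle this. A mismatch of one index would explain the factor \(\gamma^{n-1}\) rather than \(\gamma^{n}\).

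Second, quote their Theorem~II in the binary case. It has the form
\[
  \alpha\, n!\,\gamma^{n-1}\bigl(1+O(\rho^n)\bigr),
\]
with \(\gamma\) determined as the reciprocal of the dominant singularity of the exponential generating function. I would record the numerical values of \(\alpha\) and \(\gamma\) as they appear there. I would also note that their error ratio is of the form \(\rho=2/(3\gamma)\). Since \(\gamma>2/3\), this gives \(\rho<1\).

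Third, substitute using \Cref{thm:linear-spectra} to conclude.

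The main obstacle is purely bibliographic: matching normalizations (indexing, ordered versus unordered compositions, and the exact form of the error term) between our \(s_n^{\mathrm{ac}}\) and their statement. To handle this, I would verify agreement on several initial terms and on the leading exponential growth. If their error term is stated differently, for instance as \(O((2/3)^n n!)\) in absolute terms, I would rewrite it relative to the main term. Dividing by \(\alpha n!\gamma^{n-1}\) gives the stated relative error \(O(\rho^n)\).
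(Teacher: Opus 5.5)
Your proposal is correct and matches the paper's argument: \Cref{thm:linear-spectra} identifies \(s_n^{\mathrm{ac}}(\star)\) with Krenn and Wagner's count \(\mathcal W_2(1,n)\) of compositions of \(1\) into \(n\) powers of \(2\), and the asymptotic formula is then quoted from their Theorem~II. Your extra normalization checks are sensible due diligence that the paper leaves implicit: matching the initial terms \(1,1,3,13,75\), accounting for the indexing behind \(\gamma^{n-1}\), and converting an absolute error term into the relative \(O(\rho^n)\).
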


\begin{proof}
By \Cref{thm:linear-spectra}, the spectrum is the quantity denoted
\(\mathcal W_2(1,n)\) by Krenn and Wagner, so the formula is
\cite[Theorem~II]{KrennWagner2016}.
\end{proof}

The asymptotic agreement does not extend the full-linear analogy to repeated
variables.

\begin{remark}[Why the coincidence is only full-linear]
\label{rem:classical-means}
Huang and Lehtonen obtain the same associative-commutative spectrum for the
arithmetic, geometric, and harmonic means
\cite[Proposition~4.2.2]{HuangLehtonen2023}.  The coincidence is confined to
the full-linear layer, where each variable occurs once and all four operations
record its leaf depth.  Their nonlinear term theories are different.  For
example, the height terms
\[
  (x\star x)\star(x\star y)
  \qquad\text{and}\qquad
  (x\star y)\star(x\star y)
\]
have the same depth profile \((2,2)\), whereas the corresponding
arithmetic-mean terms give \(x\) the respective coefficients \(3/4\) and
\(1/2\).
\end{remark}

This boundary case therefore confirms that the profile quotient contains a
familiar classical layer, but it also marks where that analogy ends.  Repeated
labels and finite joins form the part of the theory developed here beyond
that classical layer.

\section{Further questions}
\label{sec:questions}

The preceding results give explicit normal forms and describe the coefficient
algebra, its language semantics, and its basic intrinsic structure.  Three
natural questions remain.  The first concerns the geometry of individual
profile fibres, the second concerns finite axiomatizability of the full
equational theory, and the third asks how much of the analysis persists for
higher branching arity.

\begin{question}[Reduced fibres of the depth-profile map]\label{ques:depth-fibres}
For \(n\geq1\) and \(d\in\Kraft_n\), how many silent-reduced labelled
nonplane rooted binary trees \(T\) satisfy \(d(T)=d\)?  How does this reduced
fibre size depend on the profile?  What asymptotics hold along sequences
\(d^{(k)}\in\Kraft_{n_k}\) as the number of finite coordinates, the largest
finite coordinate, or both tend to infinity?
\end{question}

The reduced fibre decomposes recursively at the root, so its enumeration may
be refined by counting unordered factorizations \(d=a\star b\) and the
representatives lying over each one.  A complementary problem is to determine
whether the trees in a reduced fibre are connected by finitely many
profile-preserving local moves.  Together with expansion and contraction of
all-silent subtrees, such moves would give a geometric presentation of the
kernel equivalence \(\sim_{\mathrm{prof}}\) of the depth-profile map.

The profile normal form decides whether any given identity holds, but it does
not determine whether all valid identities follow from finitely many of them.
This leads to a separate question about the equational theory of \(\Hh\).

\begin{question}[Finite equational basis]\label{ques:finite-basis}
Does the equational theory of \(\Hh\) in the signature
\((\vee,\star,-\infty)\) admit a finite basis, meaning a finite set
of identities from which every identity of \(\Hh\) follows?  If so, what is
such a basis?  If not, what obstruction prevents finite axiomatizability?
\end{question}

A third direction keeps the height recursion and changes only the branching
arity.

\begin{question}[Other branching arities]\label{ques:branching-arities}
We fix \(r\geq2\), and replace binary grafting by the recursion
\[
  h(T_1,\dots,T_r)=\max_i h(T_i)+1.
\]
What are the exact monomial-profile region, finite-join coefficient algebra,
and natural unit-and-zero completions?  After identifying full \(r\)-ary
depth tuples with the corresponding Kraft-equality compositions into powers
of \(r\), their full-linear asymptotics are given by
\cite[Theorem~III]{KrennWagner2016}.  What additional phenomena arise from
repeated labels and finite joins?
\end{question}

The binary theory developed here shows that weighted evaluation retains
exactly the greatest depth of each label.  The Kraft region governs single
trees, finite joins remove that constraint, and the resulting coefficient
algebra supports the language, completion, intrinsic, and full-linear
descriptions above.  The higher-arity question asks how much of this structure
is specific to binary branching.

\end{document}